\DeclareMathOperator*{\argmin}{arg\,min}
\newcommand{\calD}{\mathcal{D}}
\newcommand{\calE}{\mathcal{E}}
\newcommand{\calS}{\mathcal{S}}
\newcommand{\calG}{\mathcal{G}}
\newcommand{\calF}{\mathcal{F}}
\newcommand{\calR}{\mathcal{R}}
\newcommand{\calP}{\mathcal{P}}
\newcommand{\calM}{\mathcal{M}}
\newcommand{\calH}{\mathcal{H}}
\newcommand{\R}{\mathbb{R}}
\newcommand{\cc}{\mathrm{c}}
\newcommand{\ex}{\mathrm{ex}}
\newcommand{\wass}{{{\mathcal W}_p}}
\newcommand{\pG}{\mathcal{P_G}}
\newcommand{\rd}{\mathrm{d}}
\newcommand{\KL}{\text{KL}}
\newcommand{\true}{\text{true}}
\title{Inverse Problems Over Probability Measure Space\thanks{Submitted to the editors on \today.
\funding{M.O.~and Y.Y.~were supported by National Science Foundation (NSF) through grant DMS-2409855, Office of Naval Research through
grant N00014-24-1-2088, and Cornell PCCW Affinito-Stewart Grant. Q.L.~was supported in part by NSF grant DMS-2308440. L.W.~was supported in part by NSF grant DMS-1846854 and the Simons Fellowship.}}}
\author{Qin Li\thanks{Department of Mathematics, University of Wisconsin-Madison, Madison, WI (\email{qinli@math.wisc.edu}).}
\and Maria Oprea\thanks{Center for Applied Mathematics,  Cornell University, Ithaca, NY (\email{mao237@cornell.edu}).}
\and Li Wang\thanks{School of Mathematics, University of Minnesota Twin Cities, Minneapolis, MN (\email{liwang@umn.edu}).}
\and Yunan Yang\thanks{Department of Mathematics,  Cornell University, Ithaca, NY (\email{yunan.yang@cornell.edu}).}}
\date{\today}
\begin{document}
\maketitle

\begin{abstract}
Define a forward problem as $\rho_y=\mathcal G_\#\rho_x$, where the probability distribution $\rho_x$ is mapped to another distribution $\rho_y$ using the forward operator $\mathcal G$. In this work, we investigate the corresponding inverse problem: Given $\rho_y$, how to find $\rho_x$? Depending on whether $\mathcal G$ is overdetermined or underdetermined, the solution can have drastically different behavior. In the overdetermined case, we formulate a variational problem $\min_{\rho_x} \mathcal D(\mathcal G_\#\rho_x, \rho_y)$, and find that different choices of the metric $\mathcal D$ significantly affect the quality of the reconstruction. When $\mathcal D$ is set to be the Wasserstein distance, the reconstruction is the marginal distribution, while setting $\mathcal D$ to be a $\phi$-divergence reconstructs the conditional distribution. In the underdetermined case, we formulate the constrained optimization $\min_{\{\mathcal G_\#\rho_x=\rho_y\}}\mathcal E[\rho_x]$. The choice of $\mathcal E$ also significantly impacts the construction: setting $\mathcal E$ to be the entropy gives us the piecewise constant reconstruction, while setting $\mathcal E$ to be the second moment, we recover the classical least-norm solution. We also examine the formulation with regularization: $\min_{\rho_x} \mathcal D(\mathcal G_\#\rho_x, \rho_y) + \alpha \mathsf R[\rho_x]$, and find that the entropy-entropy pair leads to a regularized solution that is defined in a piecewise manner, whereas the $W_2$-$W_2$ pair leads to a least-norm solution where $W_2$ is the 2-Wasserstein metric. 
\end{abstract}

\section{Introduction}
We study the inverse problem over the probability measure space:
\begin{equation}\label{eqn:main_problem}
    \text{Given data }\rho_y\,,\quad\text{reconstruct }\rho_x\text{ so that }\calG_\#\rho_x\approx\rho_y\,.
\end{equation}
Here, \(\rho_x \in \calP(\Theta)\) is a probability measure of the variable \(x \in \Theta \subseteq \mathbb{R}^m\), with \(\calP(\Theta)\) denoting the collection of all probability measures on the domain \(\Theta\). $\calG$ is a function that maps $x\in\Theta$ to $y=\calG(x)\in\calR\subseteq\mathbb{R}^n$. We denote $\calR$ as the range. The map \(\calG_\#\) then is the deduced pushforward operator that maps a probability measure in \(\calP(\Theta)\) to a measure in \(\calP(\calR)\).

The given data \(\rho_y \in \calP(\mathbb{R}^n)\) is a probability measure over the variable \(y \in \mathbb{R}^n\). Notably, $\rho_y$ may have non-trivial mass outside the range $\calR$. 
This problem can be viewed as an analog of the inverse problem in linear space (e.g., Euclidean space): Given data $y$, we are to reconstruct $x$ so that $\calG(x)\approx y$.

The new problem~\eqref{eqn:main_problem} lives in the probability measure space, and thus naturally in the infinite-dimensional setting and is significantly harder to solve computationally. However, many properties are shared. Indeed, similar to the problem posed in the Euclidean space, different methods can be employed for the reconstruction, depending on the properties of $\calG$ and $y$ in~\eqref{eqn:main_problem}. In our context, we need to separate the discussion depending on the features of $\calG$ and $\rho_y$ as well.

To start, we define the feasible set,
\begin{equation}\label{eqn:feasible_set}
    \calS=\{\rho_x:\,\calG_\#\rho_x=\rho_y\}\subseteq\calP(\Theta)
\end{equation}
being the collection of all possible $\rho_x$, whom under the pushforward action by $\calG$, agrees completely with $\rho_y$. There are three scenarios:
\begin{itemize}
    \item[$\bullet$]{Unique solution}: The ideal situation occurs when the feasible set contains only one element, which corresponds to the natural inversion of $\rho_y$, and solves the problem~\eqref{eqn:main_problem}.
    \item[$\bullet$]{Overdetermined}: The feasible set is empty (corresponding to non-existence). This would occur if \(\text{supp}(\rho_y)\) is not entirely contained in \(\calR\), and there are elements of $y$ that cannot find its inversion.
    \item[$\bullet$]{Underdetermined}: The feasible set has infinitely many elements (corresponding to non-uniqueness). This can happen if \(\text{supp}(\rho_y)\) is completely inside $\calR$, and there are elements of $y$ that find many choices of inversion.
\end{itemize}

These scenarios also appear in the deterministic setting when looking for inversion $x\approx \calG^{-1}(y)$. Numerically, it is a common practice to formulate an optimization problem for finding this $x$ by adding a regularization term. This numerical strategy is feasible in both overdetermined and underdetermined settings. In the overdetermined setting, adding a regularizer can relax the feasible set constraints and tolerate errors in mismatching, and in the underdetermined setting, the added regularizer helps impose prior knowledge. Likewise, for problem~\eqref{eqn:main_problem}, we also have:
\begin{itemize}
    \item[$\bullet$]{Regularized}: This applies to both overdetermined and underdetermined settings, and we relax the problem by tolerating some errors and adding prior knowledge.
\end{itemize}

We are interested in studying problem~\eqref{eqn:main_problem} in (1).~overdetermined setting, (2).~underdetermined setting and (3).~regularized setting. More specifically, we are to spell out the behavior of the solution to problem~\eqref{eqn:main_problem} formulated in the optimization framework in the above three mentioned settings. We now summarize the findings.

% \lw{There are several places using $\rho_y$, do you actually mean $\rho_y^\text{data}$?}
% there exists at least one \(\rho_x\) such that \(\calG_\#\rho_x=\rho_y^\data\), and thus $\calS$ is not empty. W

\underline{In the overdetermined case,} \(\text{supp}(\rho_y) \varsubsetneqq \calR\), implying that \(\rho_y(\mathbb{R}^n \setminus \calR) \neq 0\). In this scenario, no measure \(\rho_x\) satisfies \(\calG_\#\rho_x = \rho_y\), making the feasible set $\calS$ completely empty. Intuitively, this reflects the fact that \(\rho_y\) possesses a non-trivial mass outside the range \(\calR\), so a perfect match is unattainable. In this case, we solve the variational problem to determine the best match:    \begin{equation}\label{eqn:variation}  
\rho_x^\ast = \argmin_{\rho_x} \calD(\calG_\#\rho_x, \rho_y)\,,
\end{equation}
for a properly chosen $\calD$. Accordingly, we also define the reconstructed data distribution
\begin{equation}\label{eqn:rho_y^ast}  
\rho_y^\ast=\calG_\#\rho_x^\ast\,.
\end{equation}
This formulation seeks a probability measure \(\rho_x\) such that, when pushed forward by \(\calG\), matches \(\rho_y\) optimally, with the optimality quantified by the misfit function \(\calD\). In this setting, it is straightforward to see that $\rho_y^\ast\neq\rho_y$ but maintains some features of $\rho_y$. 

In particular, the choice of $\calD$ is crucial and emphasizes different features of $\rho_y$. Below, we present two concrete scenarios: 
\begin{itemize}
    \item[--] Setting $\calD$ to be any $\phi$-divergence (also called the $f$-divergence in probability theory), $\rho^\ast_y$ is the conditional distribution of $\rho_y$ over the range of $\calG$;
    \item[--] Setting $\calD$ to be a Wasserstein distance, $\rho_y^\ast$ is the marginal distribution of $\rho_y$.
\end{itemize}
We should note that, at this point, these statements are not yet rigorous. The use of the terms ``conditional'' versus ``marginal'' is intended for intuition only. The precise meaning is described in Sections~\ref{sec:f-div} and~\ref{sec:Wp}, respectively.

\underline{In the underdetermined case}, \(\text{supp}(\rho_y)\subseteq\calR\), and there is at least one element in $\calS$. If $\calS$ has multiple or infinitely many solutions, we need to pick one that makes the most physical sense. To do so, we follow the footsteps of Euclidean space and solve the following constrained optimization problem:
\begin{equation}\label{eqn:least_norm}
\rho_x^\ast=\argmin_{\rho_x \in \calS} \calE[\rho_x]\,.
\end{equation} 
The choice of $\calE$ depends on the specific problem and the user's objectives. As written, \eqref{eqn:least_norm} looks for the optimal choice of $\rho_x$ within the feasible set, which is optimal in the sense characterized by $\calE$. Therefore, the choice of $\calE$ is crucial. Different definitions of $\calE$ promote different properties, leading to solutions with contrasting features. We examine the following two specific definitions of $\calE$ and report:
\begin{itemize}
    \item[--] Setting $\calE$ to be entropy, $\rho^\ast_x$ is piecewise constant function on the level sets of $\calG$.
    \item[--] Setting $\calE=\int|x|^2\rd\rho_x$, $\rho^\ast_x$ extends from the least-norm solution defined in Euclidean space.
\end{itemize}
Once again, the statement is not yet rigorous. The terms ``piecewise constant" and ``least-norm solution" need precise definitions. We discuss details in Section~\ref{sec:entropy} and Section~\ref{sec:moment} respectively.

\underline{In the regularized case,} a regularization term is added to either offset the constraints on the feasible set or to encode prior knowledge, leading to the following formulation:
\begin{align} \label{eq:reg}
  \rho_x^\ast=\argmin_{\rho_x} \calD (\calG_\#\rho_x,  \rho_y) + \alpha \mathsf{R}[\rho_x]\,,
\end{align}
where $\alpha>0$ is the regularization parameter and $\mathsf{R}: \mathcal{P}(\Theta) \rightarrow \mathbb{R}$ is the regularizing functional.

Similar to the two cases above, different choices of the pair $(\calD, \mathsf{R})$ promote different characterizations of the optimizer $\rho_x^*$. We consider the following two scenarios:
\begin{itemize}
    \item [--] Setting $\mathcal{D}$ to be the Kullback--Leibler (KL) divergence and setting $
    \mathsf{R}[\rho_x]$ also the KL divergence against a prescribed distribution $\calM$. The optimal solution $\rho_x^*$ is piecewisely defined, with the Radon--Nikodym derivative $\frac{\rd\rho_x^*}{\rd\mathcal{M}}$ being piecewise constant on the level sets of $\mathcal{G}$.
    
    \item [--] Setting $\mathcal{D}$ to be $p$-Wasserstein distance and $
    \mathsf{R}[\rho_x] := \int |x|^p \, \mathrm{d}\rho_x$, the $p$-th order moment of $\rho_x$, then the optimal distribution $\rho_x^*$ can be deduced by a revised least-norm solution map.
\end{itemize}
Similar to the scenarios above, we are to make these terminologies precise. Details will be presented in Section~\ref{sec:reg-KL} and Section~\ref{sec:reg-Wp}, respectively.

It is clear that in all the cases above, the choice of metrics plays a crucial role in the reconstruction. This is not at all surprising. The same phenomenon holds true in Euclidean space and linear function space. Specifically, in the regularized problem, classical regularizers include Tikhonov regularization~\cite{golub1999tikhonov,engl1996regularization}, total variation (TV) norm~\cite{rudin1992nonlinear} or $L^1$ norm~\cite{candes2006stable} for $\mathsf{R}$ to promote sparsity, while the $L^2$ norm (i.e., mean squared error) is often used as the data fidelity term to account for measurement error.

Nevertheless, lifting these problems to the probability space is not only a mathematically interesting question but also is backed by substantial practical demand. Over recent years, inverse problems associated with finding probability measures have gained increasing prominence. For example, in weather prediction, the goal is to infer the distribution of pressure and temperature changes~\cite{gneiting2014probabilistic}; in plasma simulation, one aims to infer the distribution of plasma particles using macroscopic measurements~\cite{Ferron_1998,caflisch2021adjoint}; in experimental design, the objective is to determine the optimal distribution of tracers or detectors to achieve the best measurements~\cite{huan2024optimalexperimentaldesignformulations,jin2024optimaldesignlinearmodels,yu2018scalable}. In optical communication, the task is to recover the distribution of the optical environment~\cite{bracchini2004spatial,Korotkova:11,Borcea2015}. Other problems include those arising in aerodynamics~\cite{del2022stochastic}, biology~\cite{davidian2003nonlinear,daun2008ensemble,tang2023ensemble}, and cryo-EM~\cite{giraldo2021bayesian,tang2023ensemble}. In all these problems, the sought-after quantity is a probability distribution, density, or measure that matches the given data. Consequently, inverse problems in this stochastic setting are naturally formulated as the inversion for a probability distribution, giving rise to the so-called stochastic inverse problem~\cite{breidt2011measure,butler2012computational,butler2013numerical,butler2014measure,marcy2022stochastic,li2023differential,white2024building}.

Therefore, it is of great importance to carefully study the properties of solutions to this type of new inverse problem. The main objective of this article is precisely that: to understand the behavior of the solutions to~\eqref{eqn:variation},~\eqref{eqn:least_norm} and~\eqref{eq:reg} under different choices of $\calD$, $\calE$, and $(\calD,\mathsf{R})$ pair, and to establish the six statements mentioned above. We summarize the findings in Table~\ref{tab:summary}. 
\begin{table}[h]
    \centering
    \renewcommand{\arraystretch}{1.3} % Increase row height for better readability
    \begin{tabular}{|c|c|c|}
     \hline
        Overdetermined & $\mathbb{R}^d$ & $\calP$ \\ \hline
        Formulation & $x^\ast=\displaystyle \argmin_x \|\calG(x) - y\|$ & $\displaystyle \rho_x^\ast=\argmin_{\rho_x} \calD(\calG_\#\rho_x, \rho_y)$ \\ \hline
        Result & \shortstack{$\calG(x^\ast) = \mathsf{P}_\calG(y)$\\ (Definition~\ref{def:proj})} & 
        \begin{tabular}{c|c}
           % \hline
            {$\calD=\phi$-divergence} & \textbf{$\calD=W_2$} \\ \hline
            \shortstack{ Conditional\\ (Theorem~\ref{thm:phi_divergence})}& \shortstack
            {Marginal\\(Theorem~\ref{thm:marginal})} \\ 
        \end{tabular} 
        \\ 
        \hline\hline
        Underdetermined & $\mathbb{R}^d$ & $\calP$ \\ \hline
        Formulation & $x^\ast=\displaystyle \argmin_{\calG(x)=y} \|x\|$ & $\displaystyle \rho_x^\ast=\argmin_{\rho_x\in\calS}\calE[\rho_x]$ \\ \hline
        Result & \shortstack{least-norm solution \\(Equation~\eqref{eqn:least_norm_Rd})} & 
        \begin{tabular}{c|c}
            % \hline
            {$\calE=$ entropy} & \textbf{$\calE[\rho_x]=\int|x|^2\rd\rho_x$} \\ \hline
            \shortstack{piecewise constant\\(Theorem~\ref{thm:KL-under})} & \shortstack{least-norm solution\\(Theorem~\ref{thm:moment_under})} \\ %\hline
        \end{tabular} \\ \hline
        \hline
                   Regularization & $\mathbb{R}^d$ & $\calP$ \\ \hline
        Formulation & $\!\!\!\min_x \|\calG(x) \!-\! \! y\|^2 \!+\! \alpha \mathsf{R}(\!x\!)$ & $\displaystyle \rho_x^\ast=\argmin_{\rho_x}D(\calG_\#\rho_x, \rho_y) + \alpha \mathsf{R}(\rho_x)$ \\ \hline
        Result & \shortstack{$ x^\ast = \mathcal{F}(y)$ \\(Definition~\ref{def:proj2})} & 
        \begin{tabular}{c|c}
            %\hline
            {entropy-entropy pair} & {$W_2$-moment pair} \\ \hline
            \shortstack{ $\frac{\rd \rho}{\rd \mathcal M}$ piecewise const \\ (Theorem~\ref{reg-KL})}& \shortstack
            {least-norm solution \\ (Theorem~\ref{thm:moment_regularizer}) } \\ %\hline
        \end{tabular}
        \\
        \hline
    \end{tabular}
    \caption{Six theorems to be proved in this paper. }
    \label{tab:summary}
\end{table}

We emphasize throughout this paper that:
\begin{itemize}
    \item We assume the three problems  (\eqref{eqn:variation},~\eqref{eqn:least_norm} and~\eqref{eq:reg}) are well-posed, in the sense that the solution can be found. This assumption is not trivial, given that the problem is posed in an infinite-dimensional space. While the well-posedness of these problems is interesting in its own right, it is somewhat orthogonal to our main goal, and we will set it aside for now.
    \item Additionally, we define a few projection operators (see Definitions~\ref{def:proj} and \ref{def:proj2}, and Equation~\eqref{eqn:least_norm_Rd}), and note that the uniqueness of these projections is not a requirement. If multiple projections exist, we have the freedom to select any one of them.
\end{itemize}

It is possible that these observations have appeared previously in the literature. However, to the best of the authors' knowledge, we have not found it well-documented systematically. We should also note that some of the results in simpler cases were reported in earlier work~\cite{li2024stochastic,li2023differential}. In particular, different recovery (conditional versus marginal) for $\calG=\mathsf{A}$ as an overdetermined linear operator was reported in~\cite{li2024stochastic}. Furthermore, in~\cite{li2023differential}, the authors reported a gradient flow optimization algorithm using the kernel method when $\calD$ takes the form of KL divergence~\cite{benamou2015iterative,di2016measure}.

The rest of the paper is structured as follows. Section~\ref{sec:over} is dedicated to problem~\eqref{eqn:variation}, with Section~\ref{sec:f-div} presenting general results for $\phi$-divergences and Section~\ref{sec:Wp} addressing the counterpart for the Wasserstein distance. Section~\ref{sec:under} explores the underdetermined case and studies problem~\eqref{eqn:least_norm}, with Section~\ref{sec:entropy} examining the problem by setting $\calE$ as an entropy and Section~\ref{sec:moment} focusing on setting $\calE$ as the moment of the distribution. Section~\ref{sec:under-reg} examines the regularized formulation with Section~\ref{sec:reg-KL} and Section~\ref{sec:reg-Wp} respectively, dedicated to entropy-entropy pair and Wasserstein-moment pair. In each section, we end with a subsection documenting the results applied to two toy examples, one linear and one nonlinear, both of which have explicit solutions. These examples highlight the contrast between different measures and their mathematical consequences. The proofs are short enough to be included directly in the main text.

\section{The Characterization of 
Solutions to problem~\eqref{eqn:variation}}\label{sec:over}
In this section, we discuss the overdetermined case, meaning the feasible solution set $\calS$ is empty, for example, due to modeling error or noisy data. As an alternative, we look at the relaxation of the inverse problem by considering a variational framework~\eqref{eqn:variation} and seeking for a $\rho_x$ that minimizes the mismatch between produced data $\calG_\#\rho_x$ and the given $\rho_y$. As a result, different choices of the distance function $\calD$ promote different properties, yielding different optimal solutions accordingly. In this section, we pay specific attention to setting $\calD$ as a $\phi$-divergence for any convex function $\phi$ (see Section~\ref{sec:f-div}) and the Wasserstein distance (see Section~\ref{sec:Wp}).

\subsection{Reconstruction when $\calD$ is $\phi$-divergence}\label{sec:f-div}

This section presents results when \(\calD\) in~\eqref{eqn:variation}   is a \(\phi\)-divergence. We rewrite~\eqref{eqn:variation}   as follows:
\begin{equation}\label{eq:f_div_min}
\min_{\rho_x \in \calP(\Theta)} \calD_\phi (\calG_{\#} \rho_x \| \rho_y)\,.
\end{equation}
The \(\phi\)-divergence between two probability measures \(P\) and \(Q\) is defined as
\[
\calD_\phi(P \| Q) = \int \phi\left(\frac{\rd P}{\rd Q}\right) \, \rd Q\,,
\]
where \(\frac{\rd P}{\rd Q}\) is the Radon–Nikodym derivative of \(P\) with respect to \(Q\), and \(\phi: \mathbb{R}^+ \to \mathbb{R}\) is a convex function such that \(\phi(1) = 0\). Common examples of \(\phi\)-divergences include the aforementioned KL divergence and the \(\chi^2\)-divergence, by setting $\phi(t)$ to be $t\log(t)$ and $(t-1)^2$, respectively.

We claim the reconstruction recovers the conditional distribution. For that, we need to give a precise definition.

\begin{definition}\label{def:conditional}
The conditional distribution of \(\rho_y\) on the range \(\calR\), denoted by $\rho_{y | \calR}^c$  and $\rho_y(\cdot | \calR)$ interchangeably, is defined as follows:
\[
\rho^\cc_{y|\calR}(B) = \frac{\rho_y(B \cap \calR)}{\rho_y(\calR)}\,, \quad \forall\, \text{measurable set } B\,.
\]
Moreover, \(\rho_{y|\mathbb{R}^n \setminus \calR}^\cc\) is the conditional distribution of \(\rho_y\) on \(\mathbb{R}^n \setminus \calR\), defined in a similar manner.
\end{definition}

As a consequence, \(\rho^\cc_{y|\calR}\) is absolutely continuous with respect to \(\rho_y\), and we have:
\[
\frac{\rd\rho^\cc_{y|\calR}}{\rd\rho_{y}}(y) = \begin{cases}
\frac{1}{\rho_{y}(\calR)}\,, & \text{if } y \in \calR\,, \\
0\,, & \text{if } y \notin \calR\,.
\end{cases}
\]

We are now ready to present our first theorem:

\begin{theorem}\label{thm:phi_divergence}
Assume that the variational problem~\eqref{eq:f_div_min} admits a minimizer \(\rho_x^* \in \calP(\Theta)\). Then, we have
\[
\calG_{\#} \rho_x^* = \rho^\cc_{y|\calR}\,.
\]
\end{theorem}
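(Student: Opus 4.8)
The key observation is that a $\phi$-divergence $\calD_\phi(P\|Q)$ is only finite when $P \ll Q$, and since $\rho_y$ has nontrivial mass on $\mathbb{R}^n\setminus\calR$ while $\calG_\#\rho_x$ is always supported on $\calR$, the divergence $\calD_\phi(\calG_\#\rho_x\|\rho_y)$ is actually always finite (the Radon–Nikodym derivative $\frac{\rd(\calG_\#\rho_x)}{\rd\rho_y}$ simply vanishes off $\calR$), but it carries a built-in "penalty" term $\phi(0)\rho_y(\mathbb{R}^n\setminus\calR)$ coming from the mass of $\rho_y$ outside $\calR$ that can never be matched. So the plan is to split the integral defining $\calD_\phi(\calG_\#\rho_x\|\rho_y)$ over $\calR$ and its complement, and show the complement contributes a constant independent of $\rho_x$, while the part over $\calR$ is minimized precisely by the conditional distribution.

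Concretely, first I would write, for any $\rho_x$ with $P:=\calG_\#\rho_x$,
\[
\calD_\phi(P\|\rho_y)=\int_{\calR}\phi\!\left(\tfrac{\rd P}{\rd\rho_y}\right)\rd\rho_y+\phi(0)\,\rho_y(\mathbb{R}^n\setminus\calR)\,,
\]
using that $P(\mathbb{R}^n\setminus\calR)=0$. The second term is a fixed constant. For the first term, I would use Jensen's inequality: since $\phi$ is convex and $\rho_y|_{\calR}$ (restricted, then normalized by $\rho_y(\calR)$) is a probability measure,
\[
\frac{1}{\rho_y(\calR)}\int_{\calR}\phi\!\left(\tfrac{\rd P}{\rd\rho_y}\right)\rd\rho_y \;\ge\; \phi\!\left(\frac{1}{\rho_y(\calR)}\int_{\calR}\tfrac{\rd P}{\rd\rho_y}\,\rd\rho_y\right)=\phi\!\left(\frac{P(\calR)}{\rho_y(\calR)}\right)=\phi\!\left(\frac{1}{\rho_y(\calR)}\right)\,,
\]
because $P(\calR)=1$. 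This gives a lower bound on $\calD_\phi(P\|\rho_y)$ that is attained iff $\frac{\rd P}{\rd\rho_y}$ is $\rho_y$-a.e.\ constant on $\calR$, i.e.\ iff $\frac{\rd P}{\rd\rho_y}=\frac{1}{\rho_y(\calR)}\mathbbm{1}_{\calR}$, which by Definition~\ref{def:conditional} is exactly $P=\rho^\cc_{y|\calR}$. Then I would need to check feasibility: there exists $\rho_x\in\calP(\Theta)$ with $\calG_\#\rho_x=\rho^\cc_{y|\calR}$, which holds because $\rho^\cc_{y|\calR}$ is supported on $\calR=\calG(\Theta)$, so one can pull it back along any measurable right inverse of $\calG$ (or use a disintegration/measurable selection argument). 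Combined with the hypothesis that a minimizer $\rho_x^*$ exists, the lower bound forces $\calG_\#\rho_x^*=\rho^\cc_{y|\calR}$.

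The main obstacle is the measure-theoretic bookkeeping around strictness and degeneracy: if $\phi$ is merely convex (not strictly convex), equality in Jensen's inequality need not force $\frac{\rd P}{\rd\rho_y}$ to be constant, so the argument as stated only shows $\calG_\#\rho_x^*$ achieves the same value as $\rho^\cc_{y|\calR}$, not that it equals it — one either needs to assume strict convexity on the relevant interval, or argue that among all minimizers the statement is understood up to this nonuniqueness (which is consistent with the paper's stated convention on nonuniqueness of projections). A secondary technical point is justifying the existence of a pullback $\rho_x$, i.e.\ that $\calG_\#:\calP(\Theta)\to\calP(\calR)$ is surjective onto measures supported on $\calR$; this needs $\calR=\calG(\Theta)$ to be, say, a measurable (or analytic) set and a measurable-selection theorem, and I would state it as a standing regularity assumption on $\calG$ rather than belabor it. Everything else — the split of the integral, the constant term, and the Jensen bound — is routine.
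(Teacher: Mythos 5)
Your proof is correct and takes essentially the same route as the paper's: the paper also splits $\calD_\phi(\rho_y'\|\rho_y)$ into the constant contribution $\nu(0)\phi(0)$ from $\mathbb{R}^n\setminus\calR$ plus an integral over $\calR$ (formalized there via the Measure Disintegration Theorem applied to the indicator map of $\calR$), applies Jensen's inequality to obtain the lower bound $\nu(1)\phi(1/\nu(1))+\nu(0)\phi(0)$, and handles feasibility of the pullback via a left inverse of $\calG$ exactly as you sketch. The strict-convexity caveat you raise is a fair observation, but it applies equally to the paper's own argument, which likewise only verifies that $\rho^\cc_{y|\calR}$ attains the bound and implicitly relies on the stated convention that nonunique minimizers are identified.
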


The proof of Theorem~\ref{thm:phi_divergence} requires the use of Measure Disintegration Theorem~\cite[Thm.~5.3.1]{ambrosio2005gradient}, which guarantees the existence of the conditional distribution. Specifically, for any given map \( T \) that maps from a probability space \( (Y, \mathcal{B}_Y, \mu) \) to a measurable space \( (Z, \mathcal{B}_Z) \), and defining \(\nu = T_\# \mu\), the theorem states that for \(\nu\)-a.e. \(z \in Z\), there exists a family of probability measures \(\{\mu_z : z \in Z\}\) on \( (Y, \mathcal{B}_Y) \) that satisfies: $\mu(B) = \int_Z \mu_z(B) \, \rd\nu(z)$ for any measurable set $B \in \mathcal{B}_Y$. The collection \(\{\mu_z\}_z\) is called the disintegration of \(\mu\) with respect to \(T\). In our context, $Y$ would be the whole of $\R^n$ and $\mu$ is our data distribution $\rho_y$. We need to identify the appropriate measurable space and find the correct map \(T\), as detailed in the following proof.

\begin{proof}[Proof of Theorem~\ref{thm:phi_divergence}]
    Define a map \( T: \mathbb{R}^n \rightarrow \{0,1\} \) such that 
    \[
    z = T(y) = \begin{cases}
        1 \,, &y \in \calR\,,\\
        0 \,, & y \not\in \calR\,.\\
    \end{cases}
    \]
    By applying the Measure Disintegration Theorem to \( \rho_y \) based on the map \( T \), we obtain a discrete probability measure \( \nu \), with 
    \[
    \nu(1) = \rho_{y}(\calR), \quad \nu(0) = \rho_{y}(\mathbb{R}^n \setminus \calR),
    \]
    and the following disintegration of \( \rho_y \):
    \begin{equation}
       \rho_{y} = \nu(1) \, \rho_{y|\calR}^\cc + \nu(0)  \,  \rho_{y|\mathbb{R}^n \setminus \calR}^\cc\,,
    \end{equation}
    where \( \rho_{y|\calR}^\cc \) and \( \rho_{y|\mathbb{R}^n \setminus \calR}^\cc \) are the conditional distributions given in Definition~\ref{def:conditional}. The Measure Disintegration Theorem further states that this disintegration is unique.
   
To show that \( \rho_{y|\calR}^\cc \) is the optimal solution, we rewrite the variational problem~\eqref{eq:f_div_min} as:
\begin{equation}\label{eq:f_div_min_2}
\min_{\rho_x \in \calP(\Theta)} \calD_\phi (\calG_{\#} \rho_x || \rho_y) = \min_{\substack{\rho_y' \in \calP(\R^d) \\ \rho_y'({\R^n\setminus\calR})=0 }} \calD_\phi (\rho_y' || \rho_y)\,.
\end{equation}
This is true because $\{\calG_\# \rho_x |\rho_x\in \calP(\Theta)\}   = \{\rho_y'\in \calP(\R^d): \rho_y' (\R^n \setminus \calR) = 0\}$. The ``$\subseteq$'' direction holds directly since $\calR = \calG(\Theta)$.  The ``$\supseteq$'' direction holds because, for a given $\rho_y'$, $\text{supp}(\rho_y')\subseteq \calR$ and by using the left inverse function of $\calG$ 
we can define a distribution $\rho_x' \in \calP(\Theta)$ such that $\calG_\# \rho_x' = \rho_y'$.

Without loss of generality, we only examine $\rho_y'$ that is absolutely continuous with respect to $\rho_y$\footnote{Otherwise, $\calD_\phi(\rho_y' || \rho_y)$ achieves the maximum value, making such $\rho_y'$ irrelevant as we aim to find the minimum. The maximum value for this divergence is $\infty$ in the case of KL and $\chi^2$, and $1$ in the case of TV.}. By the definition of the \( \phi \)-divergence, we have
    \begin{eqnarray*}
    \calD_\phi(\rho_y' || \rho_y) &=& \int \phi\left(\frac{\rd\rho_y'}{\rd\rho_y}\right) \rd\rho_y\\
        &=& \nu(1) \int \phi\left(\frac{\rd\rho_y'}{\rd\rho_y}\right) \rd\rho_{y|\calR}^\cc +  \nu(0) \int \phi\left(\frac{\rd\rho_y'}{\rd\rho_y}\right) \rd\rho_{y|\mathbb{R}^n\setminus \calR}^\cc\,.
    \end{eqnarray*}
Since \( \rho_y'(\mathbb{R}^n \setminus \calR) = 0 \), the second term is a constant $\nu(0)\phi(0)$. Thus, we are left with:
\begin{eqnarray*}
    \calD_\phi(\rho_y' || \rho_y) &=& \nu(1) \int \phi\left(\frac{\rd\rho_y'}{\nu(1)  \rd\rho_{y|\calR}^\cc + \nu(0)  \rd\rho_{y|\mathbb{R}^n\setminus \calR}^\cc }\right) \rd\rho_{y|\calR}^\cc + \nu(0)\phi(0)\\
    &=& \nu(1) \int \phi\left(\frac{1}{\nu(1) }\frac{\rd\rho_y'}{ \rd\rho_{y|\calR}^\cc} \right) \rd\rho_{y|\calR}^\cc + \nu(0)\phi(0) \\
    &\geq &\nu(1) \phi\left(\frac{1}{\nu(1)}\right) + \nu(0)\phi(0)\,,
\end{eqnarray*} 
where in the last step we applied Jensen's inequality, leveraging the convexity of \( \phi \). The equality holds when \( \rho_y' = \rho_{y|\calR}^\cc \), completing the proof.
\end{proof}
Although the reconstructed $\calG_\# \rho_x^*$ is expected to somewhat agree with $\rho_y$ on the range $\calR$, the fact that the mismatch between $\rho_y$ and  $\calG_\# \rho_x^*$ on $\calR$ is merely a constant multiple is not entirely trivial. Jensen's inequality and the convexity of $\phi$ play a major role.

\subsection{Reconstruction when $\calD$ is $W_p$}\label{sec:Wp}

We now move on by setting \( \calD \) as a Wasserstein distance. This amounts to rewriting~\eqref{eqn:main_problem} as:
\begin{equation}\label{eq:W_min}
\inf_{\rho_x \in \calP(\Theta)} \wass (\calG_{\#} \rho_x,  \rho_y),
\end{equation}
where \( \wass(\cdot, \cdot) \) is the \( p \)-Wasserstein distance. For any two probability measures \( \mu \) and \( \nu \), the \( p \)-Wasserstein distance is:
\begin{equation}\label{eq:wass_p}
\wass(\mu, \nu) = \left(\min_{\gamma \in \Gamma(\mu, \nu)} \int d^p(x, y) \,\mathrm{d}\gamma\right)^{1/p}\,, \quad p \geq 1\,,
\end{equation}
where \( d \) is a metric over \( \mathbb{R}^n \) and \( \Gamma(\mu,\nu) \) represents the set of all couplings between the two measures $\mu$ and $\nu$. The most common choice of $d$ is the Euclidean distance.

To characterize the minimizer of \eqref{eq:W_min}, we first define an inversion map $\calF$ and its associated projection map \( \pG \) as follows. 
\begin{definition} \label{def:proj}
    Define the inversion map \( \calF: \mathbb{R}^n \rightarrow \Theta \) as 
    \[
    \calF(y^*) = \argmin_{x\in\Theta} d(\calG(x),y^*)\,,
    \]
    and the projection map as
    \[
    \pG(y^*) = \argmin_{y \in \calR} d(y,y^*) = \calG(\calF(y^*))\,.
    \]
    If the minimizer is not unique, one has the freedom to select one.
\end{definition}
\begin{theorem}\label{thm:marginal}
Assume solution to problem~\eqref{eq:W_min} exists. Then one of the minimizers takes the following form:
\begin{equation}\label{eqn:push_F}
\rho_x^*=\calF_{\#}\rho_y\,,
\end{equation}
and the reconstruction recovers the projection:
\[
\calG_{\#} \rho_x^*  = \pG_{\#} \rho_y\,.
\]
\end{theorem}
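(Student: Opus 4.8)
The plan is to show that the candidate $\rho_x^* = \calF_\# \rho_y$ attains the infimum in~\eqref{eq:W_min} by establishing a matching lower bound that holds for \emph{every} admissible $\rho_x \in \calP(\Theta)$. The key observation is that, for any $\rho_x$, the pushforward $\calG_\#\rho_x$ is a measure supported on $\calR$, so in transporting mass from $\calG_\#\rho_x$ to $\rho_y$ we are really transporting a measure living on $\calR$ onto the fixed target $\rho_y$. The pointwise cost of moving any $y^* $ (a point in the support of $\rho_y$) to the nearest point of $\calR$ is exactly $d(\pG(y^*), y^*)$ by Definition~\ref{def:proj}, and no coupling can do better than paying this unavoidable cost at each point. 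I will make this precise below.

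\emph{Step 1: Lower bound.} Let $\rho_x\in\calP(\Theta)$ be arbitrary and set $\mu = \calG_\#\rho_x$, so $\mu(\R^n\setminus\calR)=0$. Let $\gamma\in\Gamma(\mu,\rho_y)$ be any coupling. Since $\gamma$-a.e.\ first coordinate $x'$ lies in $\calR$, we have $d(x',y^*)\ge \min_{y\in\calR}d(y,y^*) = d(\pG(y^*),y^*)$ for $\gamma$-a.e.\ $(x',y^*)$. Hence
\begin{equation*}
\int d^p(x',y^*)\,\rd\gamma(x',y^*)\;\ge\;\int d^p(\pG(y^*),y^*)\,\rd\rho_y(y^*)\,,
\end{equation*}
because the right-hand side depends only on the $\rho_y$-marginal. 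Taking the infimum over $\gamma$ and then over $\rho_x$ gives $\wass(\calG_\#\rho_x,\rho_y)^p \ge \int d^p(\pG(y^*),y^*)\,\rd\rho_y(y^*)$ for all $\rho_x$.

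\emph{Step 2: The candidate attains the bound.} With $\rho_x^* = \calF_\#\rho_y$, we compute $\calG_\#\rho_x^* = (\calG\circ\calF)_\#\rho_y = \pG_\#\rho_y$ using the identity $\pG = \calG\circ\calF$ from Definition~\ref{def:proj}; this already proves the second displayed claim. To bound $\wass(\pG_\#\rho_y,\rho_y)$ from above, I exhibit an explicit coupling: push $\rho_y$ forward under the map $y^*\mapsto(\pG(y^*),y^*)$, i.e.\ take $\gamma^* = (\pG,\mathrm{id})_\#\rho_y$. Its marginals are $\pG_\#\rho_y$ and $\rho_y$ respectively, so $\gamma^*\in\Gamma(\pG_\#\rho_y,\rho_y)$, and its cost is exactly $\int d^p(\pG(y^*),y^*)\,\rd\rho_y(y^*)$, matching the lower bound of Step~1. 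Therefore $\rho_x^*$ is a minimizer, completing the proof.

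\emph{Main obstacle.} The delicate point is measurability: Definition~\ref{def:proj} selects $\calF(y^*)$ (hence $\pG(y^*)$) from an $\argmin$ set that may not be a singleton, and one must ensure a \emph{measurable} selection exists so that $\calF_\#\rho_y$, $\pG_\#\rho_y$, and the coupling $\gamma^*$ are well-defined measures. Under mild regularity (e.g.\ $\calG$ continuous, $\Theta$ closed, so that $x\mapsto d(\calG(x),y^*)$ is lower semicontinuous with nonempty $\argmin$), a measurable selector is guaranteed by the Kuratowski--Ryll-Nardzewski measurable selection theorem; I would invoke this and otherwise rely on the paper's standing convention that these projection maps are assumed well-defined. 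A secondary subtlety is the interchange of $\inf_\gamma$ and $\int$ in passing from the pointwise inequality to Step~1's bound, which is immediate here since the dominating integrand $d^p(\pG(y^*),y^*)$ does not depend on the coupling. No compactness or duality machinery is needed; the argument is a direct primal comparison.
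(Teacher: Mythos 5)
Your proposal is correct and follows essentially the same primal comparison as the paper: the pointwise inequality $d(\tilde y,y)\ge d(\pG(y),y)$ for mass supported on $\calR$ gives the lower bound, and the explicit coupling $(\pG,\mathrm{id})_\#\rho_y$ (the paper's $\widetilde{\pG}_\#\rho_y$) shows the candidate $\calF_\#\rho_y$ attains it. Your remark on measurable selection addresses a point the paper handles only by its standing convention of choosing one minimizer, but the core argument is identical.
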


\begin{proof}
    By the definition of the \( \wass \) distance, we have 
    \begin{align} \label{p1}
        \wass(\calG_{\#} \rho_x, \rho_y)^p = \int d(\tilde{y}, y)^p \, \rd \pi(\tilde{y}, y)\,,
    \end{align}
    where \( \pi \in \Gamma(\calG_{\#} \rho_x, \rho_y) \) is the optimal coupling between \( \calG_{\#} \rho_x \) and \( \rho_y\). From Definition~\ref{def:proj}, we can deduce that 
    \begin{align}
    \int d(\tilde{y}, y)^p \, \rd \pi(\tilde{y}, y) &\geq 
    \int d ( \pG(y), y)^p \, \rd \pi(\tilde{y}, y) \nonumber \\
    & = \int d ( \pG(y), y)^p \, \rd \rho_y(y)\nonumber 
    \\ 
    & =  \int d(z, y)^p \, \rd \gamma(z, y),\quad \text{where }  \gamma = \widetilde{\pG}_\# \rho_y \text{ and } \widetilde{\pG}= \begin{pmatrix}
        \pG \\ 
         \mathsf{I}_n
    \end{pmatrix} \nonumber \\
    & \geq \wass(\pG_{\#} \rho_y, \rho_y)^p \,,\label{p2}
    \end{align}
    where $ \mathsf{I}_n$ is the $n$-dimensional identity matrix.
    Considering
    \[
    \pG_{\#} \rho_y=\left(\calG\circ\calF\right)_{\#}\rho_y =\calG_{\#}\left(\calF_{\#}\rho_y\right)=\calG_{\#}\rho_x^*\,,
    \]
    and recalling definition~\eqref{eqn:push_F}, the inequality above shows that \( \wass(\calG_{\#} \rho_x, \rho_y)^p \geq \wass(\calG_{\#} \rho^*_x, \rho_y)^p\) for all $\rho_x$, concluding the proof.
\end{proof}

\subsection{Examples}\label{sec:example_over}
A key distinction between the $\phi$-divergence case and the $p$-Wasserstein case lies in how the reference data distribution $\rho_y$ is utilized. In the $\phi$-divergence case, the optimizer relies only on a small subset of the reference distribution $\rho_y$: $\rho_y$ confined within $\calR$. In contrast, the $p$-Wasserstein case employs the entire reference distribution $\rho_y$ to generate the marginal distribution. Neither result is completely surprising, but their contrasting features bring out beautiful effects that we did not find in the literature. We demonstrate these results using a couple of simple examples.

\subsubsection{Linear pushforward maps}
Assume $\calG = \mathsf{A}: \Theta= \mathbb{R}^m \rightarrow \mathbb{R}^n$ is linear and full-rank. Since we are in the overdetermined setting, $m<n$. Then the range $\calR = \mathsf{A}(\Theta) \cong \mathbb{R}^{m}$. 
If the given data distributions $\rho_y \in \mathcal{P}(\mathbb{R}^n)$ has nontrivial support outside $\calR$, the feasible set $\calS$ is empty. We then consider problem~\eqref{eqn:variation} to find the optimal solution $\rho_x$.
\begin{itemize}
    \item[--] $\calD$ is a $\phi$-divergence: Theorem~\ref{thm:phi_divergence} states that $\mathsf{A}_\#\rho_x^*$ recovers the conditional distribution of $\rho_y$, namely:
    \[\mathsf{A}_\#\rho_x^*=\rho_y(\cdot|\calR)\,.
    \]
\item[--] $\calD$ is the $p$-Wasserstein metric: Theorem~\ref{thm:marginal} states that $\mathsf{A}_\#\rho_x^*$ recovers the marginal distribution of $\rho_y$. Indeed, in this situation, one can even compute $\rho_x^\ast$ (see~\cite{li2023differential}):
\[
\rho_x^\ast=\mathsf{A}^\dagger_\# \rho_y\,,
\]
where $\mathsf{A}^\dagger = (\mathsf{A}^\top \mathsf{A})^{-1} \mathsf{A}^\top$ is the pseudoinverse (left inverse) of $\mathsf{A}$. 
Consequently, $\mathsf{A}_\#\rho_x^\ast=\left( \mathsf{A}\mathsf{A}^\dagger\right)_\# \rho_y$ indeed recovers the marginal distribution of $\rho_y$ along the $m$-dimensional subspace of $\mathbb{R}^n$.
\end{itemize}

\subsubsection{A simple nonlinear example}
Consider the map $\calG: [0,1]\times[0,2\pi]\to B_{(0,0)}(1)$ where
\[
(y_1,y_2)=\calG(r,\theta)=(r\cos\theta\,,r\sin\theta)\,.
\]
We prepare our noisy data as:
\[
\rho_y = \frac{1}{2\pi}\mathbb{I}_{y_1^2 + y_2^2 < 1}+\frac{1}{2\pi}\delta(y_1^2 + y_2^2 - 4)\,,
\]
where $\mathbb{I}_{A}$ is the indicator function on the set $A$. This means that we prepare half of our data as a uniform distribution over the ball $B_{(0,0)}(1)$ in the range (as denoted by $\frac{1}{2\pi}\mathbb{I}_{y_1^2 + y_2^2< 1}$), and the other half of the data uniformly on the ring of radius $2$ (as denoted by $\frac{1}{2\pi}\delta(y_1^2 + y_2^2-4)$, with an extra $\frac{1}{2\pi}$ factor coming from normalization\footnote{Note that the integral of $\delta(y_1^2 + y_2^2-R^2)$ over the entire plane is $\pi$ for all values of $R>0$.}. See Figure~\ref{fig:over-data} for an illustration. According to our earlier statement, conditional and marginal distributions are obtained if $\phi$-divergence or the $p$-Wasserstein distance is used, respectively. Namely:
\begin{itemize}
    \item[--]{$\calD$ is a $\phi$-divergence:}
    \[
\rho_{y}^\ast=\calG_\# \rho_x^\ast = \frac{1}{\pi}\mathbb{I}_{y_1^2 + y_2^2\leq 1}\,,
    \]
    This is the recovery of the conditional distribution. All data points, conditioned on them being in the range $B_{(0,0)}(1)$, are preserved. All data points outside the range are completely forgotten. Note that the normalizing constant has changed. See Figure~\ref{fig:over-f-div} for the plot of $\rho_y^\ast$.
    \item[--]{$\calD$ is the $W_p$:}
    \[
\rho_{y}^\ast= \calG_\# \rho_x^\ast = \frac{1}{2\pi}\mathbb{I}_{y_1^2 + y_2^2 < 1}+\frac{1}{2\pi}\delta(y_1^2 + y_2^2- 1)\,.
    \]
    This is the recovery of the marginal distribution, with all data points projected to the range $B_{(0,0)}(1)$. In this context, all data points sitting on the ring of $\delta(y_1^2 + y_2^2- 4)$  are projected down to the ring of $\delta(y_1^2 + y_2^2-1)$, within the range. See Figure~\ref{fig:over-Wp} for the plot of $\rho_y^\ast$.
\end{itemize}

\begin{figure}
    \centering
    \subfloat[Data distribution $\rho_y$]{\includegraphics[height = 3.6cm]{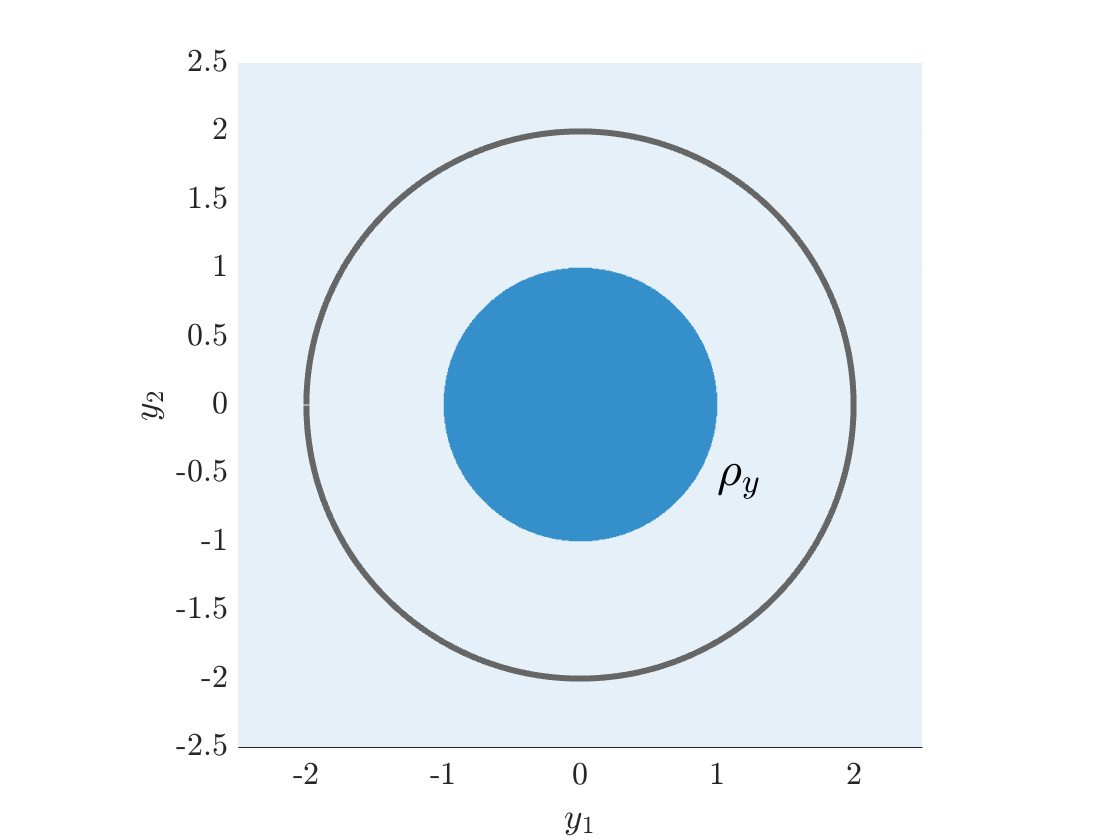}\label{fig:over-data}}
    \subfloat[$\phi$-divergence optimizer]{\includegraphics[height = 3.6cm]{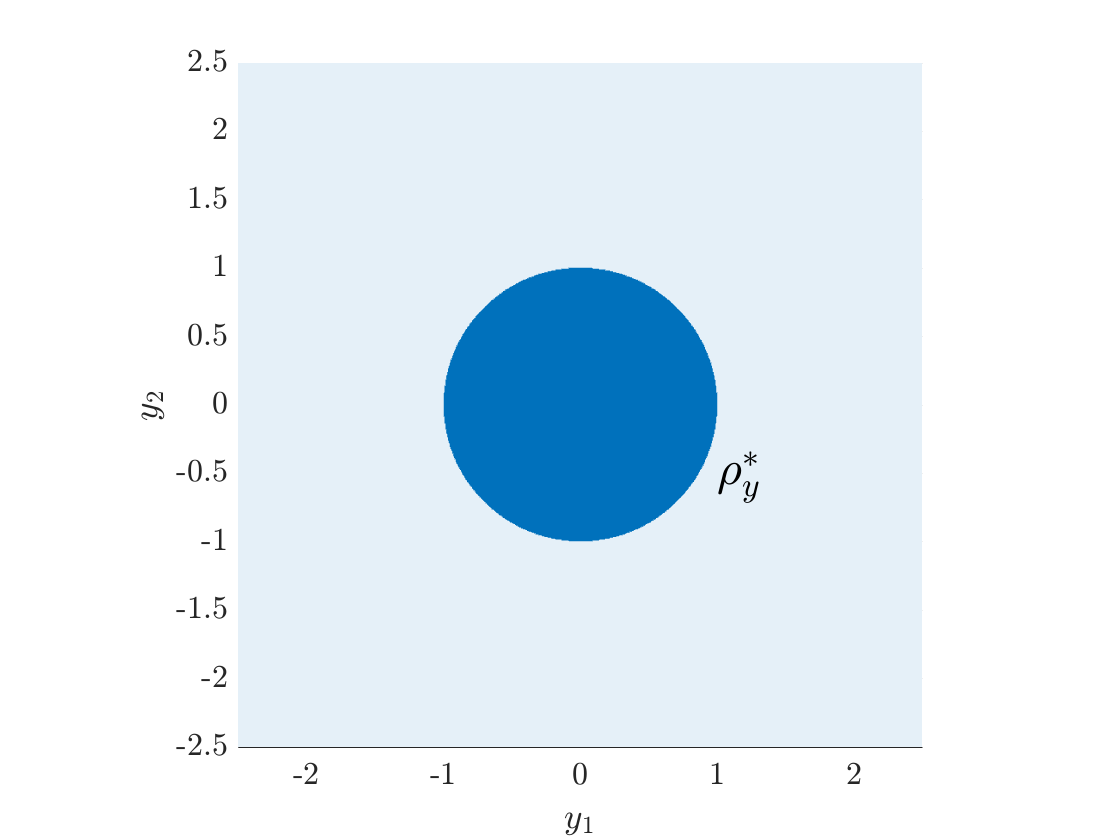}\label{fig:over-f-div}}
    \subfloat[$W_p$ optimizer]{\includegraphics[height = 3.6cm]{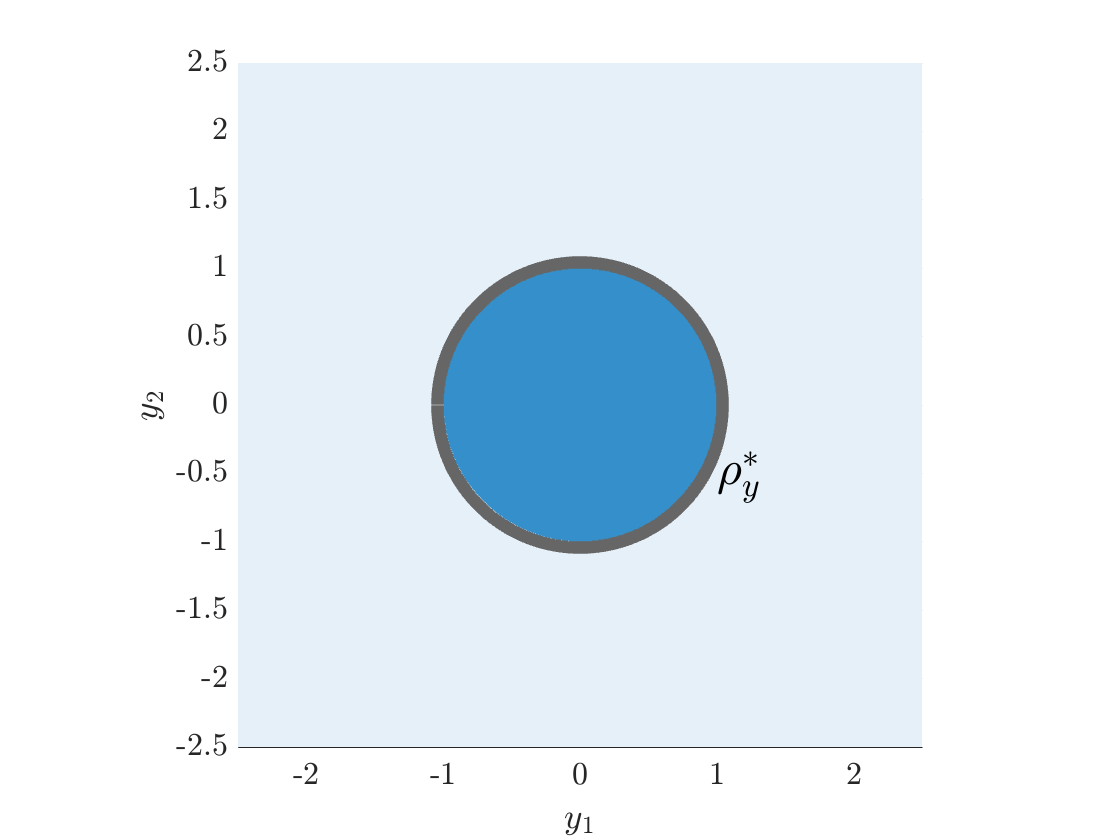}\label{fig:over-Wp}}
    \caption{We denote $\rho_y^* := \calG_\#\rho_x^*$; (a): the noisy data distribution with mass supported outside the range $\calR$; (b): the reconstructed data distribution with $\calD$ being the $\phi$-divergence; (c): the reconstructed data distribution with $\calD$ being the $p$-Wasserstein metric.}
    \label{fig:over}
\end{figure}

\section{The Characterization of 
Solutions to problem~\eqref{eqn:least_norm}}\label{sec:under}
In this section, we discuss the underdetermined situation. This is the situation where there is more than one element in the feasible solution set $\calS$, and we are tasked to pick one by solving~\eqref{eqn:least_norm}. Naturally, different choices of $\calE$ will promote different properties and produce different optimal solutions accordingly. As a natural start, we pay special attention to the settings when $\calE$ is defined as the entropy (see Section~\ref{sec:entropy}) or the second-order moment of the distribution (see Section~\ref{sec:moment}).

\subsection{Optimization when $\mathcal E[\rho_x] = \int \rho_x \ln \rho_x \rd x$}\label{sec:entropy}
We first examine problem~\eqref{eqn:least_norm} by setting $\calE$ as the entropy. As such, we look for $\rho_x$ that are absolutely continuous with respect to the Lebesgue measure, which means that we consider $\calP_{\text{ac}}(\Theta)$ to be our search space. For the convenience of the derivation, we also assume throughout this section that $\rho_y\in \calP_{\text{ac}}(\calR)$, and that $\Theta\subsetneqq \R^m$ is compact.

The optimal solution can be explicitly written down as given in Theorem~\ref{thm:KL-under} below.
\begin{theorem} \label{thm:KL-under}
The optimizer $\rho_x^*$ of problem~\eqref{eqn:least_norm} with $\calE[\rho_x] = \int \rho_x \ln \rho_x \rd x$ has the following property:
\begin{align} \label{rho_x_star_under}
\rho_x^*(\, \boldsymbol{\cdot}\, |{\calG^{-1}(y)}) = \mathcal{U}\left( {\calG^{-1}(y)}\right)\,,\quad\forall y\,,
\end{align}
where $\rho_x^*(\cdot|{\calG^{-1}(y)})$ denotes the conditional distriution of $\rho_x^*$  on the set $\calG^{-1}(y)$ and $\mathcal{U}\left( {\calG^{-1}(y)}\right)$ is the uniform distribution over the set  ${\calG^{-1}(y)}$. In particular:
\begin{align} \label{312}
\quad \rho_x^*(x)=\frac{\rho_y(\calG(x))}{\int\delta(\calG(x)-\calG(x'))\rd x'}\,.
\end{align}
\end{theorem}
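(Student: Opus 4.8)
The plan is to write down the explicit candidate directly and prove it is optimal by a one-line relative-entropy inequality, rather than by variational calculus. Introduce the coarea weight $Z(y) := \int \delta(\calG(x') - y)\,\rd x'$, which, under standard regularity on $\calG$ (smooth, with $D\calG$ of full rank for a.e.\ $x$, so the level sets $\calG^{-1}(y)$ are compact submanifolds and the coarea formula applies), equals $\int_{\calG^{-1}(y)} |J\calG(x)|^{-1}\,\rd\mathcal{H}^{m-n}(x)$ with $|J\calG| = \sqrt{\det(D\calG\, D\calG^\top)}$, and is finite and positive for $\rho_y$-a.e.\ $y$. Define $\rho_x^*$ by the density $p^*(x) = \rho_y(\calG(x))/Z(\calG(x))$, i.e.\ formula~\eqref{312}.

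First I would verify feasibility, $\calG_\#\rho_x^* = \rho_y$, which is a direct computation with the coarea formula: for any bounded test function $\phi$, $\int \phi(\calG(x))\,p^*(x)\,\rd x = \int \phi(y)\,\tfrac{\rho_y(y)}{Z(y)}\bigl(\int_{\calG^{-1}(y)}|J\calG|^{-1}\,\rd\mathcal{H}^{m-n}\bigr)\rd y = \int \phi(y)\,\rho_y(y)\,\rd y$; taking $\phi\equiv 1$ shows $p^*$ is a probability density, so $\rho_x^* \in \calS$. Next, optimality: take any competitor $\rho_x \in \calS$ with density $p$. Feasibility forces $\rho_x(\{p^*=0\}) = \rho_x(\calG^{-1}\{\rho_y=0\}) = \rho_y(\{\rho_y=0\}) = 0$, hence $\rho_x \ll \rho_x^*$, so $\KL(\rho_x\|\rho_x^*) \ge 0$ gives $\int p\ln p\,\rd x \ge \int p\ln p^*\,\rd x$. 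The key observation is that $\ln p^*(x) = h(\calG(x))$ with $h(y) := \ln\rho_y(y) - \ln Z(y)$ depends on $x$ only through $\calG(x)$; therefore $\int p(x)\ln p^*(x)\,\rd x = \int h(y)\,\rd(\calG_\#\rho_x)(y) = \int h(y)\,\rho_y(y)\,\rd y$, a quantity that does \emph{not} depend on the choice of $\rho_x \in \calS$. Applying this identity once to $\rho_x$ and once to $\rho_x^*$ yields $\calE[\rho_x] \ge \int p\ln p^* = \int p^*\ln p^* = \calE[\rho_x^*]$, so $\rho_x^*$ minimizes $\calE$ over $\calS$, uniquely, since equality in $\KL \ge 0$ forces $p = p^*$. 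For the conditional statement~\eqref{rho_x_star_under}, note that $p^*$ is constant on every level set $\calG^{-1}(y)$, so the disintegration of $\rho_x^*$ with respect to $\calG$ is proportional on each fiber to the disintegration of Lebesgue measure; that is, $\rho_x^*(\cdot\,|\,\calG^{-1}(y)) = \mathcal{U}(\calG^{-1}(y))$.

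I expect the main obstacle to be purely the measure-theoretic bookkeeping rather than the inequality: justifying the coarea/disintegration identities for the class of $\calG$ considered, guaranteeing $0 < Z(y) < \infty$ (so $p^*$ is a genuine density), and addressing integrability of $h$ against $\rho_y$ (equivalently, finiteness of $\calE[\rho_x^*]$). All of these are absorbed by the paper's blanket well-posedness assumption together with mild regularity of $\calG$. A secondary subtlety worth flagging is the precise reading of ``$\mathcal{U}(\calG^{-1}(y))$'': it is the normalized disintegration of Lebesgue measure on the fiber — heuristically $\delta(\calG(\cdot)-y)/Z(y)$ — and the content of~\eqref{rho_x_star_under} is exactly that the optimal density factors through $\calG$.
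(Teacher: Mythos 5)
Your proof is correct, but it takes a genuinely different route from the paper's. The paper derives the structural fact $\rho_x^*(x) = g(\calG(x))$ from the first-order optimality condition of the Lagrangian $\int \rho_x\ln\rho_x - \int \lambda(y)\bigl((\calG_\#\rho_x)(y)-\rho_y(y)\bigr)\rd y$, and only afterwards pins down $g$ by imposing the constraint with a test-function computation; it is a necessary-condition argument that presupposes (as the paper explicitly assumes) that a minimizer exists. You instead guess the closed-form candidate~\eqref{312} up front, check feasibility by the coarea formula, and verify \emph{global} optimality by the Gibbs inequality: since $\ln p^*$ factors through $\calG$, the cross term $\int p\ln p^*\,\rd x = \int h\,\rd(\calG_\#\rho_x)$ is the same for every $\rho_x\in\calS$, so $\calE[\rho_x]-\calE[\rho_x^*]=\KL(\rho_x\|\rho_x^*)\ge 0$. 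This buys you strictly more than the paper proves: existence of the minimizer (by exhibiting it), global rather than merely stationary optimality without invoking convexity of $\calE$ along the linear constraint set, and uniqueness from strictness of the KL inequality. You are also more careful than the paper on two points the paper glosses over: the absolute-continuity check $\rho_x\ll\rho_x^*$ needed before writing $\KL(\rho_x\|\rho_x^*)$, and the correct reading of ``uniform on the fiber'' as the normalized coarea disintegration $\delta(\calG(\cdot)-y)/Z(y)$ rather than normalized Hausdorff measure. The price is that your argument is specific to the entropy functional (it is the classical maximum-entropy/Gibbs device), whereas the paper's Lagrangian template transfers verbatim to the regularized entropy-entropy problem in its Theorem~\ref{reg-KL}. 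The regularity caveats you flag (finiteness and positivity of $Z$, integrability of $h$ against $\rho_y$) are real but are indeed covered by the paper's blanket well-posedness and compactness assumptions.
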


The theorem  states that the mass cumulated at $y=\calG(x)$ is transferred back to the $x$ domain and distributed uniformly across the preimage $\calG^{-1}(y)$ by all the points in $x$. This property aligns with the fact that entropy increases with variations, so minimizing entropy discourages variations. The uniform distribution, having the least variation, minimizes the entropy.

\begin{proof}[Proof of Theorem~\ref{thm:KL-under}]
To begin with, let $\rho_x^*$ be the optimizer of problem~\eqref{eqn:least_norm} with $\calE[\rho_x] = \int \rho_x \ln \rho_x \rd x$. We claim that there is a function $g: \calR \rightarrow \mathbb{R}_{\geq 0}$ so that 
\begin{equation}\label{eq:g}
\rho_x^*(x) = g(\calG(x))\,.
\end{equation} 
To see this, consider the Lagrangian to the constrained optimization problem~\eqref{eqn:least_norm}:
\begin{align*}
    \mathcal L[\rho_x] &:= \int \rho_x(x)\ln \rho_x(x) \rd x - \int \lambda(y) \left((\calG_\# \rho_x)(y) - \rho_y(y) \right) \rd y
    \\ & = \int \rho_x(x) \ln \rho_x(x) \rd x - \int \lambda(\calG(x))  \rho_x(x) \rd x +\int \lambda(y) \rho_y (y) \rd y\,,
\end{align*}
where $\lambda(y)$ is the Lagrangian multiplier. The optimizer $\rho_x^*$ satisfies the first-order optimality condition
\begin{align*}
    \left.\frac{\delta \mathcal L }{\delta \rho_x} \right|_{\rho_x^*} = \ln \rho_x^*(x)  + 1 - \lambda(\calG(x)) =  C\,,
\end{align*}
which leads to $\ln \rho_x^*(x) = \lambda (\calG(x))-1 +C $ where the constant $C$ is determined by normalization\footnote{Note that $\left.\frac{\delta \mathcal L }{\delta \rho_x} \right|_{\rho_x^*}=0$ in this context means $\langle \left.\frac{\delta \mathcal L }{\delta \rho_x} \right|_{\rho_x^*}\,,\rho-\rho_x^*\rangle = 0$ for all $\rho$.}. This verifies our claim in~\eqref{eq:g} as $\rho_x^*(x)$ only depends on the value of $\calG(x)$, which also leads to~\eqref{rho_x_star_under}. 

In order to determine an explicit formula for $g$, we use the fact that the constraint has to be satisfied, i.e., $\calG_\#\rho_x^* =\rho_y$. Consider an arbitrary test function $f \in \mathcal{C}_c^\infty (\R^n)$. Then the constraint can be rewritten as:
\begin{align*}
    \int f(y) \rho_y(y) \rd y &= \int f(y) \rd \left(\calG_\#\rho_x^*\right) (y) \\
    & = \int f(\calG(x))  \rho_x^* \rd x \qquad \text{where}\quad  \rho_x^*  = g(\calG(x))\\
    &= \int f(\calG(x)) g(\calG(x)) \rd x \\
    &= \int f(y) g(y) \int \delta\left(y - \calG(x)\right) \rd x\,.
\end{align*}
Since the above holds for any test function $f$, we conclude that
$$ 
g(y) \int \delta(y - \calG(x)) \rd x = \rho_y(y) \implies g(y) = \frac{\rho_y(y)}{\int \delta(y - \calG(x)) \rd x}\,.
$$
Plugging the above expresion into~\eqref{eq:g}, we obtain~\eqref{312}.
\end{proof}

\subsection{Optimization when $\calE[\rho_x]=\int|x|^2\rd\rho_x$}\label{sec:moment}
Just as in the deterministic case where the least-norm solution is sought for when the feasible set is not unique, in this section, we look for the ideal distribution in the feasible set $\calS$ by choosing one that has the least ``norm". In particular, we will set the objective function to be the second-order moment of the distribution. To ensure the second-order moment even exists, we naturally work with the space of $\mathcal{P}_2(\Theta)$.

Similar to the result presented in Section~\ref{sec:entropy}, we can explicitly write down the optimal solution given in Theorem~\ref{thm:moment_under} below.
\begin{theorem}\label{thm:moment_under}
The optimizer $\rho_x^*$ of problem~\eqref{eqn:least_norm} with $\calE[\rho_x] = \int |x|^2 \rho_x \rd x$ is 
\begin{align*}
     \rho_x^* = \calH_\#\rho_y\,, 
\end{align*}
where $\calH(y)$ is the minimal-norm solution to $\calG(x) = y$ in the Euclidean space:  
\begin{equation}\label{eqn:least_norm_Rd}
     \calH(y) := \argmin_{\calG(x) = y,\, x\in\Theta} |x|^2\,.
\end{equation}
If the minimimum is not unique, we have the freedom to choose one.
\end{theorem}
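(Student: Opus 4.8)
The plan is to follow the same three-move template as the proof of Theorem~\ref{thm:marginal}: produce an explicit candidate, check that it lies in the feasible set $\calS$, and defeat every competitor by a pointwise inequality transported through a change of variables.

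First I would isolate the two properties of the selection map $\calH$ from~\eqref{eqn:least_norm_Rd} that drive everything. Property~(i): $\calG(\calH(y)) = y$ for every $y\in\calR$, simply because $\calH(y)$ is by definition a point of the level set $\calG^{-1}(y)$; since we are in the underdetermined regime, $\mathrm{supp}(\rho_y)\subseteq\calR$, so $\calH$ is defined $\rho_y$-a.e., and by~(i) we get $\calG_\#(\calH_\#\rho_y)=(\calG\circ\calH)_\#\rho_y=\rho_y$, i.e.\ $\calH_\#\rho_y\in\calS$. Property~(ii): for every $x\in\Theta$, the point $\calH(\calG(x))$ minimizes $|\cdot|^2$ over $\calG^{-1}(\calG(x))$, and $x$ itself belongs to that set, hence $|\calH(\calG(x))|\le|x|$.

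Next, for an arbitrary $\rho_x\in\calS$ I would chain
\[
\int_\Theta |x|^2\,\rd\rho_x(x)\;\overset{(ii)}{\ge}\;\int_\Theta |\calH(\calG(x))|^2\,\rd\rho_x(x)\;=\;\int_{\calR}|\calH(y)|^2\,\rd(\calG_\#\rho_x)(y)\;=\;\int_{\calR}|\calH(y)|^2\,\rd\rho_y(y),
\]
using the pushforward change-of-variables formula and $\calG_\#\rho_x=\rho_y$. Applying the same change of variables to the candidate gives $\int_\Theta|x|^2\,\rd(\calH_\#\rho_y)(x)=\int_{\calR}|\calH(y)|^2\,\rd\rho_y(y)$, so the lower bound is attained exactly at $\rho_x^*=\calH_\#\rho_y$; this also shows $\rho_x^*\in\mathcal{P}_2(\Theta)$ as soon as $\calS\cap\mathcal{P}_2(\Theta)\neq\emptyset$.

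The one genuinely technical point, and the place I expect to have to be careful, is the measurability of $\calH$: without it neither $\calH_\#\rho_y$ nor the integrals $\int|\calH(y)|^2\,\rd\rho_y$ are meaningful. I would dispatch this by invoking a measurable-selection theorem --- e.g.\ Kuratowski--Ryll-Nardzewski --- applied to the closed-valued multifunction $y\mapsto\calG^{-1}(y)$ (closed when $\calG$ is continuous), noting that the coercive objective $|\cdot|^2$ makes the $\argmin$ nonempty and that one may always pick a Borel-measurable minimizer; compactness of $\Theta$ (or merely nonemptiness of $\calS\cap\mathcal{P}_2$) removes any remaining integrability concern. Unlike the entropy case in Theorem~\ref{thm:KL-under}, no disintegration, Lagrangian, or first-order optimality condition is needed --- everything collapses onto the elementary bound $|\calH(\calG(x))|\le|x|$.
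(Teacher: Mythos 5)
Your proposal is correct and follows essentially the same route as the paper's proof: the elementary pointwise bound $|\calH(\calG(x))|^2\le|x|^2$ combined with the pushforward change of variables $\int|\calH(y)|^2\,\rd(\calG_\#\rho_x)=\int|\calH(\calG(x))|^2\,\rd\rho_x$. The only additions are the explicit feasibility check $\calH_\#\rho_y\in\calS$ and the measurable-selection remark, both of which the paper leaves implicit.
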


\begin{proof}
    We first note that, from the definition of $\calH$:
    \begin{align}
        |\calH(\calG(x))|^2 \leq |x|^2\,.
    \end{align}
    Then to show that $\rho_x^*$ is optimal, we see that for any other $\rho_x$ such that $\calG_\# \rho_x = \rho_y$, we have:
    \begin{align*}
        \int |x|^2 \rd\rho_x^*(x) & = 
        \int |x|^2 \rd (\calH_\# \rho_y ) = \int |\calH(y)|^2\rd\rho_y (y)
        \\ & = \int |\calH(y)|^2 \rd \left( \calG_\#\rho_x \right) (y)
        = \int |\calH(\calG(x))|^2 \rd\rho_x(x)
        \\ & \leq \int |x|^2 \rd\rho_x(x)\,.
    \end{align*}
\end{proof}

\subsection{Examples}\label{sec:example_under}
Both proofs are straightforward, but the difference between the two theorems signals the drastic differences between moment-minimization and entropy-minimization. According to Theorem~\ref{thm:KL-under}, minimizing entropy encourages the pull-back samples to spread out the whole preimage. On the contrary, Theorem~\ref{thm:moment_under} suggests that the solution that minimizes moments pulls every $y\in\calR$ back to one sample in the preimage, defined by $\calH$.

\subsubsection{Linear pushforward maps}
If $\calG$ is linear with full column rank and we denote the pushforward map by a matrix $\mathsf{A}:\Theta \subset \mathbb{R}^m \rightarrow  \mathbb{R}^n$ where $\Theta $ is a compact subset. We assume $\mathsf{A}$ is full-rank, $m > n$ and $\calS$ contains infinitely many elements. In this context, the range $\calR = \mathsf{A}(\Theta)\subset\mathbb{R}^n$, and we assume the support of the data distribution $\text{supp}(\rho_y) \subseteq \calR = \mathsf{A}(\Theta)$. To compute the preimage, we note that for every fixed $y\in \mathbb{R}^n$, $\mathsf{A}^{-1}(y)$ is a compact subset lying in a linear subspace of at most  $(m-n)$-dimensional. Moreover, define the least-norm solution:

\begin{equation}\label{eqn:min_norm_A}
    x_y=\argmin_{\mathsf{A}x = y} |x|^2 =  \mathsf{A}^\top(\mathsf{A} \mathsf{A}^\top)^{-1} y = \mathsf{A}^\dagger y\,,
\end{equation}
where $\mathsf{A}^\dagger$ denotes the pseudoinverse (i.e., right inverse) of matrix $\mathsf{A}$. 
Then the preimage of $y$ can be represented as:
\[
\mathsf{A}^{-1}(y) = \{x_y+\mathsf{A}_\perp\}\cap\Theta\,,
\]
where $\mathsf{A}_\perp$ is the subspace perpendicular to the row-space of $\mathsf{A}$. We further define, according to~\eqref{eqn:least_norm_Rd}:
\[
\calH(y)=\argmin\{|x|^2: x\in \mathsf{A}^{-1}(y)\}\,.
\]
\begin{itemize}
    \item[--] When $\calE = \int \rho \log \rho\, \rd x$, based on Theorem~\ref{thm:KL-under}, the optimal distribution $\rho_x^\ast$ conditioned on the preimage $\mathsf{A}^{-1}(y)$ is the uniform measure over $\mathsf{A}^{-1}(y)\subset \Theta$.
    \item[--] When $\calE = \int |x|^2 \rd\rho(x)$, based on Theorem~\ref{thm:moment_under}, the optimal distribution $\rho_x^\ast$ conditioned on the preimage $\mathsf{A}^{-1}(y)$, for a fixed $y \in \text{supp}(\rho_y)$,  is a single Dirac delta measure supported at the minimum-norm solution. Calling~\eqref{eqn:min_norm_A}, we have:
    \[
    \rho_x^\ast = \calH_\# \rho_y\,.
    \]
    This example was already shown in~\cite[Theorem 4.7] {li2023differential}.
\end{itemize}

\subsubsection{A simple nonlinear example} \label{sec:example0}
A nonlinear example that can be made explicit is as follows. Define $\calG : B_{(1,1)}(1) \to[0,1]$, where $B_{(1,1)}(1)$ is a ball centered at $(1,1)$ with radius $1$, 
\[
r = \calG(x_1,x_2) = \sqrt{{(x_1-1)^2+(x_2-1)^2}}\,.
\]
Then any preimage of $r$ is:
\[
\calG^{-1}(r) = \left\{(x_1=1+ r\cos\theta,x_2=1+ r\sin\theta)\,,\quad\theta\in[0,2\pi]\right\}\,.
\]
Through a simple calculation, within this preimage, the least-norm solution to~\eqref{eqn:least_norm_Rd} is:
\[
\calH(r) = \left(1-\frac{r}{\sqrt 2},1-\frac{r}{\sqrt 2}\right),\quad 0\leq r\leq 1\,.
\]

Suppose we are given a probability measure $\mu_r$ with a support in $[0,1]$. Then the two problems mentioned above are to find a distribution on $B_{(1,1)}(1)$ within the set of:
\[
\calS=\left\{\rho(x_1,x_2):\quad\calG_\#\rho=\mu_r\right\}\,,
\]
that minimizes entropy or the second-order moment. Explicit solutions are available:
\begin{itemize}
    \item[--]{$\calE=\iint\rho(x_1,x_2)\ln\rho(x_1,x_2) \,\rd{x_1}\rd{x_2}$:}
    \[
    \rho(x_1,x_2) = \frac{\mu_r\left(\sqrt{{(x_1-1)^2+(x_2-1)^2}}\right)}{2\pi\sqrt{(x_1-1)^2+(x_2-1)^2}}\,.
    \]
    This is a density function supported in the ball of $B_{(1,1)}(1)$, with uniform density on each ring. The density of the ring that is $r$ away from the center $(1,1)$ is scaled by $\frac{1}{2\pi r}$\footnote{Note that the integral of $\delta(\sqrt{x_1^2+x_2^2}-R)$ over the plane is $2\pi R$}. See Figure~\ref{fig:under-entropy} for an illustration of the optimal $\rho_x$.
    
    \item[--]{$\calE=\iint \left(x_1^2+x_2^2\right)\rho(x_1,x_2)\rd{x_1}\rd{x_2}$:}
    \[
    % \rho(x,y) =        \frac{\sqrt{2}(1-x)}{\sqrt{(x-1)^2+(y-1)^2}}\mu_r\left(\sqrt{\frac{(x-1)^2+(y-1)^2}{2}}\right) \delta_{x}(y)
    \rho(x_1,x_2) =   \begin{cases}
        % \frac{\mu_r(\sqrt{(x_1-1)^2+(x_2-1)^2})}{\sqrt{(x_1-1)^2+(x_2-1)^2}}\delta_{x_1}(x_2)        
\sqrt{2}\mu_r(\sqrt{(x_1-1)^2+(x_2-1)^2})\delta(x_1-x_2)  \,, & 1-\frac{1}{\sqrt{2}} \leq x_1 \leq 1 \\
        0 \,, & x_1>1\,.
    \end{cases} 
    \]
    This is a density function supported only in the line section along the straight line $x_1=x_2$ between the point $(1-\frac{1}{\sqrt{2}},1-\frac{1}{\sqrt{2}})$ and the point $(1,1)$. See Figure~\ref{fig:under-moment} for an illustration of the optimal $\rho_x$ under this choice of $\calE$.
\end{itemize}

\begin{figure}[ht!]
    \centering
    \subfloat[$\calE = \int \rho_x \log \rho_x \rd x$]{\includegraphics[width=0.5\linewidth]{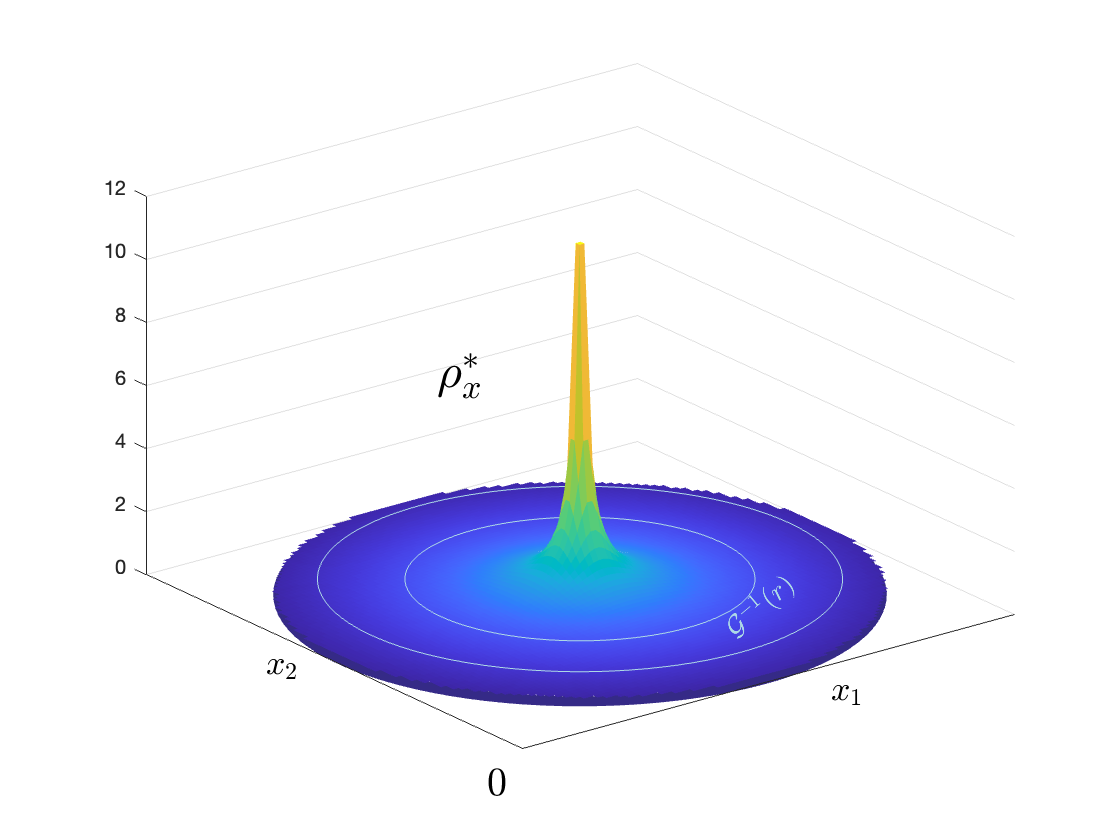}\label{fig:under-entropy}}
    \subfloat[$\calE = \int |x|^2\rho_x\rd x$]{\includegraphics[width=0.5\linewidth]{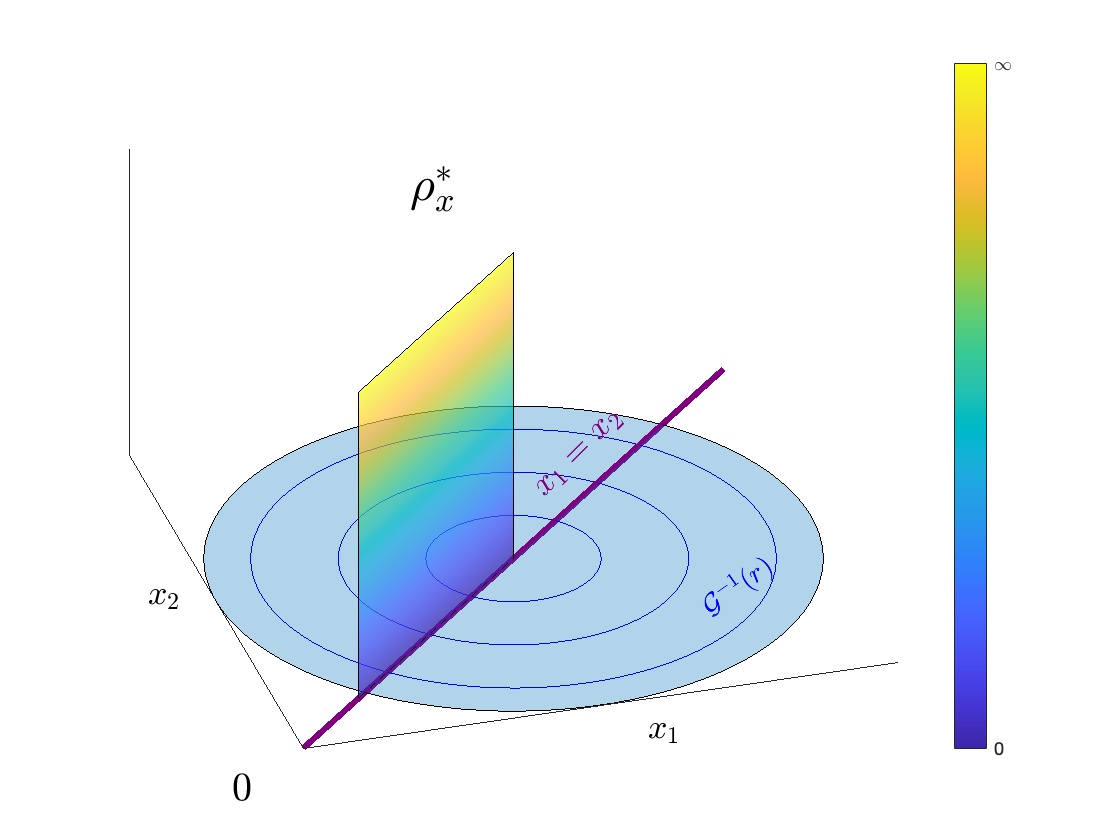}\label{fig:under-moment}}
    \caption{The data distribution  $\mu_r= \mathcal{U}([0,1])$ is the uniform distribution in $[0, 1]$.  (a): the optimizer to~\eqref{eqn:least_norm} when $\calE$ is the entropy. The density is constant when restricted to level sets of $\calG$; (b) the optimizer to~\eqref{eqn:least_norm} when $\calE$ is the second-order moment. The parameter distribution is concentrated at the element of the minimum norm on each level set of $\calG$. }
    \label{fig:under-case}
\end{figure}

\section{The Characterization of 
Solutions to problem~\eqref{eq:reg}}\label{sec:under-reg}
In this section, we characterize solutions to the regularized data-matching variational problem~\eqref{eq:reg} under two choices of the $(\calD, \mathsf{R})$, the KL-entropy pair (see Section~\ref{sec:reg-KL}) and the $W_p$-moment pair (see Section~\ref{sec:reg-Wp}).

\subsection{Entropy-entropy pair}\label{sec:reg-KL}
Set $\calD= \KL$ and choose $\mathsf{R}[\rho_x] = \KL(\rho_x||\mathcal{M})$, with $\calM \in \calP(\Theta)$ being a desired output probability measure for which $\frac{\rd \rho_x}{\rd \calM}$ exists. For the rest of this analysis, we assume that all probability distributions are absolutely continuous with respect to the Lebesque measure on the corresponding spaces, and we use the same notation to refer to the distribution and its corresponding density interchangeably. 
Problem~\eqref{eq:reg} can be rewritten as:
\begin{equation}\label{eq:regularized_KL}
\rho_x^*=\argmin_{\rho_x \in \mathcal P_{2,\text{ac}}}\;  \mathcal L(\rho_x)\,,\qquad \mathcal L(\rho_x): = \KL(\mathcal{G}_\#\rho_x|| \rho_y ) + \alpha  \int  \log \frac{\rho_x}{
\mathcal M}\, \rho_x \rd x \,.
\end{equation}
Under these assumptions, we have the following theorem.

\begin{theorem} \label{reg-KL}
The optimizer $\rho_x^*$ of \eqref{eq:regularized_KL} has the following property:
\begin{align} \label{rho_x_star}
\frac{\rho_x^*(\,\boldsymbol \cdot\, |{\calG^{-1}(y)}) }{\calM(\,\boldsymbol \cdot\, |{\calG^{-1}(y)}) }=  g(y)
\,,\quad\forall y \in \mathcal{R}\,,
\end{align}
where $g(y) $ is a constant only depending on $y$ while $ \rho_x^*(\,\boldsymbol \cdot\, |{\calG^{-1}(y)})$ and  $ \calM(\,\boldsymbol \cdot\, |{\calG^{-1}(y)})$  denote the conditional distributions of $\rho_x^*$ and $\calM$  on the preimage $\calG^{-1}(y)$, respectively.
In particular:
\begin{align} \label{312_revise}
\rho_x^*(x) \propto \calM (x) \left[ \frac{\rho_y(\calG(x))}{\int\delta(\calG(x)-\calG(x')) \calM(x')\rd x'} \right]^{\frac{1}{1+\alpha}}\,.
\end{align}
\end{theorem}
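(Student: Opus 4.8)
The plan is to follow the variational-calculus template from the proof of Theorem~\ref{thm:KL-under}, with one extra algebraic twist caused by the data-fidelity term now being a KL divergence rather than a hard constraint. First I would write the objective in density form: letting $q(y) := \int \delta(y-\calG(x))\,\rho_x(x)\,\rd x$ denote the Lebesgue density of $\calG_\#\rho_x$ on $\calR$ and $Z(y) := \int \delta(y-\calG(x'))\,\calM(x')\,\rd x'$ the density of $\calG_\#\calM$, the functional reads $\mathcal L(\rho_x) = \int q\log(q/\rho_y)\,\rd y + \alpha\int \rho_x\log(\rho_x/\calM)\,\rd x$, to be minimized over $\rho_x\in\calP_{2,\text{ac}}(\Theta)$ subject only to $\int\rho_x\,\rd x = 1$. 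Introducing a scalar multiplier $C$ for the normalization and using $\delta q(y)/\delta\rho_x(x) = \delta(y-\calG(x))$ together with the chain rule, the first-order optimality condition becomes
\begin{equation*}
\log\frac{q(\calG(x))}{\rho_y(\calG(x))} + \alpha\log\frac{\rho_x^*(x)}{\calM(x)} = C'\,,
\end{equation*}
where $C'$ absorbs the additive $1+\alpha$.

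From this identity the first claim~\eqref{rho_x_star} is immediate: solving for the ratio gives $\rho_x^*(x)/\calM(x) = e^{C'/\alpha}\bigl(\rho_y(\calG(x))/q(\calG(x))\bigr)^{1/\alpha}$, whose right-hand side depends on $x$ only through $\calG(x)$; hence $\rho_x^*/\calM$ is constant on each level set $\calG^{-1}(y)$, so it equals some function $g(y)$, and the conditional distributions of $\rho_x^*$ and $\calM$ on $\calG^{-1}(y)$ agree up to this constant. The remaining task is to pin down $g$, which is where the self-consistency enters. Since $\rho_x^* = \calM\cdot (g\circ\calG)$, pushing forward gives $q(y) = \calG_\#\rho_x^*(y) = g(y)\int\delta(y-\calG(x'))\,\calM(x')\,\rd x' = g(y)Z(y)$; substituting $q = gZ$ back into $g(y) = e^{C'/\alpha}(\rho_y(y)/q(y))^{1/\alpha}$ yields $g(y)^{1+1/\alpha} = e^{C'/\alpha}(\rho_y(y)/Z(y))^{1/\alpha}$, so $g(y)\propto (\rho_y(y)/Z(y))^{1/(1+\alpha)}$. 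Plugging this into $\rho_x^*(x) = \calM(x)\,g(\calG(x))$ and recalling $Z(\calG(x)) = \int\delta(\calG(x)-\calG(x'))\,\calM(x')\,\rd x'$ gives exactly~\eqref{312_revise}. I would also sanity-check by verifying that the pushforward of this formula equals the optimal marginal $q\propto \rho_y^{1/(1+\alpha)}Z^{\alpha/(1+\alpha)}$, and by noting that $\mathcal L$ is convex in $\rho_x$ so the stationary point is the global minimizer.

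The main obstacle I anticipate is rigor rather than algebra: the computation above uses the formal co-area identity $\int\delta(y-\calG(x))\,(\,\cdot\,)\,\rd x$ and treats $\calG_\#\rho_x$ as having a Lebesgue density on $\calR$, which is only legitimate under suitable submersion/regularity hypotheses on $\calG$ of the kind implicitly assumed throughout Section~\ref{sec:under}. A fully rigorous version would instead disintegrate $\rho_x$ and $\calM$ along the fibers $\calG^{-1}(y)$ --- exactly as the Measure Disintegration Theorem is invoked in the proof of Theorem~\ref{thm:phi_divergence} --- thereby splitting $\mathcal L$ into an information-projection subproblem on each fiber (whose minimizer forces $\rho_x^*(\,\cdot\,|\calG^{-1}(y)) = \calM(\,\cdot\,|\calG^{-1}(y))$ and yields the first claim) plus a convex minimization over the marginal $q$ on $\calR$ of $\KL(q\|\rho_y)+\alpha\KL(q\|\calG_\#\calM)$ (whose Euler--Lagrange equation produces the exponent $\tfrac{1}{1+\alpha}$), and would also require justifying differentiation under the integral sign. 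Consistent with the paper's stated conventions, I would flag this but carry out the derivation at the formal level.
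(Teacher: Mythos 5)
Your proposal is correct and follows essentially the same route as the paper: the same first-order optimality condition, the same deduction that $\rho_x^*/\calM$ depends on $x$ only through $\calG(x)$, the same pushforward identity $\calG_\#\rho_x^* = g\cdot(\calG_\#\calM)$ (which the paper verifies against test functions, as you do implicitly), and the same algebraic equation yielding the exponent $\tfrac{1}{1+\alpha}$. Your remarks on rigor (co-area/disintegration) are a fair caveat but do not change the argument, which the paper likewise carries out at the formal level.
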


\begin{proof}
Since the KL divergence is convex (in the usual sense) and the pushforward action is a linear operator, the optimal solution of \eqref{eq:regularized_KL} can be obtained by solving the first-order optimality condition: 
\begin{align*}
   C_0 = \frac{\delta \mathcal L}{\delta \rho_x} \bigg|_{\rho_x^*} 
   & = 1+ \log \frac{\calG_\# \rho_x^*}{ \rho_y} (\calG(x))+ \alpha \left[ 1+ \log \frac{\rho_x^* }{\calM }(x)\right]\,,
\end{align*}
where $C_0$ is any constant. This implies that 
\begin{align} \label{317}
    (\calG_\# \rho_x^* )(\calG(x)) \left[ \frac{\rho_x^*}{\calM} (x)\right]^\alpha \varpropto \rho_y(\calG(x))\,.
\end{align}
Hence, there exists a to-be-determined function $g$ such that the following holds:
\begin{align} \label{315}
    \frac{\rho_x^*}{\calM} (x) = g(\calG(x))\,,
\end{align}
which also leads to~\eqref{rho_x_star}. 

Next, we claim that \eqref{315} leads to
\begin{align} \label{316}
    (\calG_\# \rho_x^* ) (y) = g(y) \int \delta (y- \calG(x)) \calM(x) \rd x\,.
\end{align}
Indeed, consider a mesurable function $f$ as a test function. Then by multipying both sides of \eqref{316} by $f(y)$ and integrating against $y$, we can rewrite the left-hand side (LHS) 
\begin{align*}
    \text{LHS} = \int f(y) (\calG_\# \rho_x^* ) (y) \rd y = \int f(\calG(x)) \rho_x^*(x) \rd x = \int f(\calG(x)) \calM(x) g(\calG(x)) \rd x\,,
\end{align*}
where the last equality uses \eqref{315}. For the right-hand side (RHS), we have
\begin{align*}
    \text{RHS} = \int g(y) f(y)  \int \delta (y- \calG(x)) \calM(x) \rd x \,\rd y = \int g(\calG(x)) f(\calG(x)) \calM(x) \rd x\,.
\end{align*}
Substituting \eqref{315} and \eqref{316} into \eqref{317} leads to an algebraic equation for $g$, which can be solved to obtain the final result:
\[
g(y) = \left(\frac{e^{C_0 - 1- \alpha} \rho_y(y) }{\int \delta\left(y - \mathcal{G}(x) \right) \calM(x) \rd x}\right)^{\frac{1}{1 + \alpha}}\,.
\]
Since the algebraic equation only leads to one solution, we also conclude that the function $g$  appearing in~\eqref{315} is unique.
\end{proof}
If the regularization coefficient $\alpha = 0$, the parameter space $\Theta$ is compact, $\calM$ is the uniform distribution over $\Theta$, and $\rho_y \in \mathcal{P}(\calR)$, then Theorem~\ref{reg-KL} arrives at the same result as Theorem~\ref{thm:KL-under}.

\subsection{$\wass$-moment pair}\label{sec:reg-Wp}
We now consider the case where $\calD = \mathcal{W}_p^p $, the $p$-th power of the $p$-Wasserstein metric and the regularization term  $\mathsf{R}[\rho_x]= \int |x|^p \rd \rho_x(x)$, for $1\leq p \leq \infty$.
Then problem~\eqref{eq:reg} can be rewritten as:
\begin{equation}\label{eq:regularized2}
\rho_x^\ast=\argmin_{\rho_x \in \mathcal P_p}\, \mathcal{L}(\rho_x)\,, \qquad \mathcal{L}(\rho_x) = \mathcal{W}_p^p (\mathcal{G}_\# \rho_x, \rho_y) + \alpha  \int |x|^p \rd \rho_x(x)\,.
\end{equation}

To characterize the minimizer, we first define a map $\tilde{\mathcal F}$ as follows. 
\begin{definition} \label{def:proj2}
Let $\tilde{\calF}: \mathbb{R}^n \rightarrow \Theta\subset \mathbb{R}^m$ be defined as
\begin{equation}\label{def:reg_det}
    \tilde{\calF}(y) = \argmin_{x\in\Theta} \left\{|\calG(x) - y|^p + \alpha |x|^p\right\}\,.
  \end{equation}
\end{definition}

Then we have the following theorem:
\begin{theorem}\label{thm:moment_regularizer}
    Given $\rho_y\in \calP(\mathbb{R}^n)$, and the forward map $\calG: \Theta \subset \mathbb{R}^m \rightarrow \calR \subset \mathbb{R}^n $ where $\calR$ and $\Theta$ are compact, then the minimizer $\rho_x^*$ to problem~\eqref{eq:regularized2} satisfies
    \begin{equation}
        \rho_x^\ast = \tilde{\calF}_\# \rho_y\,.
    \end{equation}
\end{theorem}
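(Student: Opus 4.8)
The plan is to recycle the strategy behind Theorems~\ref{thm:marginal} and~\ref{thm:KL-under}: rewrite the objective $\mathcal L$ so that its two terms live on a common coupling, collapse the minimization over $\rho_x$ into a single minimization over couplings whose second marginal is $\rho_y$, and then observe that this minimization decouples into a $y$-by-$y$ finite-dimensional problem solved exactly by $\tilde\calF(y)$ in the sense of Definition~\ref{def:proj2}.

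\textbf{Step 1 (a transport identity).} First I would prove that for every $\rho_x\in\calP_p(\Theta)$,
\[
\mathcal W_p^p(\calG_\#\rho_x,\rho_y)\;=\;\min_{\gamma\in\Gamma(\rho_x,\rho_y)}\int|\calG(x)-y|^p\,\rd\gamma(x,y)\,.
\]
The bound ``$\le$'' is immediate, since for any $\gamma\in\Gamma(\rho_x,\rho_y)$ the pushforward of $\gamma$ under $(x,y)\mapsto(\calG(x),y)$ lies in $\Gamma(\calG_\#\rho_x,\rho_y)$ and carries cost $\int|\calG(x)-y|^p\,\rd\gamma$. For ``$\ge$'' I would take an optimal plan $\pi\in\Gamma(\calG_\#\rho_x,\rho_y)$, disintegrate $\rho_x$ along $\calG$ into measures $\{(\rho_x)_{\tilde y}\}$ concentrated on the level sets $\calG^{-1}(\tilde y)$ (the Disintegration Theorem quoted after Theorem~\ref{thm:phi_divergence}), and glue this family to $\pi$ to obtain $\sigma\in\Gamma(\rho_x,\rho_y)$ with $\calG(x)=\tilde y$ holding $\sigma$-almost surely; then $\int|\calG(x)-y|^p\,\rd\sigma=\int|\tilde y-y|^p\,\rd\pi=\mathcal W_p^p(\calG_\#\rho_x,\rho_y)$.

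\textbf{Step 2 (collapse and decouple).} Because $\int|x|^p\,\rd\rho_x=\int|x|^p\,\rd\gamma$ whenever $\gamma$ has first marginal $\rho_x$, substituting the identity of Step 1 into~\eqref{eq:regularized2} gives
\[
\inf_{\rho_x\in\calP_p}\mathcal L(\rho_x)\;=\;\inf_{\gamma}\ \int\Big(|\calG(x)-y|^p+\alpha|x|^p\Big)\,\rd\gamma(x,y)\,,
\]
where the right-hand infimum runs over all $\gamma\in\calP_p(\Theta\times\R^n)$ with second marginal $\rho_y$; this is a transport-type problem with cost $c(x,y):=|\calG(x)-y|^p+\alpha|x|^p$ constrained on only one marginal. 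Disintegrating $\gamma$ over $\rho_y$ as $\gamma(\rd x,\rd y)=\gamma_y(\rd x)\,\rho_y(\rd y)$ yields $\int c\,\rd\gamma=\int\big(\int c(x,y)\,\gamma_y(\rd x)\big)\rho_y(\rd y)\ \ge\ \int\min_{x\in\Theta}c(x,y)\,\rho_y(\rd y)$, and equality holds exactly when $\gamma_y=\delta_{\tilde\calF(y)}$ for $\rho_y$-a.e.\ $y$, that is, when $\gamma$ is the pushforward of $\rho_y$ under $y\mapsto(\tilde\calF(y),y)$. The first marginal of this optimal plan is precisely $\tilde\calF_\#\rho_y$, so $\rho_x^*=\tilde\calF_\#\rho_y$ attains the infimum; compactness of $\Theta$ and $\calR$ and continuity of $\calG$ ensure the minimum defining $\tilde\calF$ is attained and that $\tilde\calF$ admits a measurable selection, so $\tilde\calF_\#\rho_y$ is well defined.

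\textbf{Main obstacle and remarks.} The technical heart is Step 1: justifying the gluing construction and the claim $\calG(x)=\tilde y$ $\sigma$-a.s., which is where the Disintegration Theorem and measurability of the level-set decomposition enter. Everything downstream is the same pointwise minimization used in Theorems~\ref{thm:marginal} and~\ref{thm:KL-under}. An alternative route that avoids gluing on the $\rho_x$ side instead invokes only the pointwise inequality $|x|^p\ge|\calH(\calG(x))|^p$ from the proof of Theorem~\ref{thm:moment_under} to transfer the moment term onto the intermediate variable $\tilde y$ through $\calG_\#\rho_x$ before the pointwise step, and reaches the same conclusion. Finally, for the borderline exponent $p=\infty$ one reads $\mathcal W_\infty$ and the moment term as essential suprema (and the pointwise step as minimizing a maximum); the argument is otherwise unchanged.
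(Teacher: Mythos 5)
Your proof is correct, and it takes a genuinely different (and somewhat more direct) route than the paper's, while sharing the same technical kernel. The paper first proves Lemma~\ref{lem:was_reg_1}, observing that $\mathsf{R}[\rho_x]=\mathcal{W}_p^p(\rho_x,\delta_0)$ and packaging the whole objective as a single Wasserstein distance $\mathcal{W}_p^p(\tilde\calG_\#\rho_x,\bar\rho_y)$ in the augmented space $\mathbb{R}^{n+m}$, with $\tilde\calG(x)=(\calG(x),\alpha^{1/p}x)$ and $\bar\rho_y=\rho_y\otimes\delta_0$; it then invokes Theorem~\ref{thm:marginal} on this augmented overdetermined problem. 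You instead keep the two terms separate but pull them onto a common coupling $\gamma$ with second marginal $\rho_y$ and free first marginal, reducing everything to $\inf_\gamma\int\bigl(|\calG(x)-y|^p+\alpha|x|^p\bigr)\rd\gamma$, which decouples $y$-by-$y$ after disintegration over $\rho_y$. The identity in your Step~1 --- that an optimal plan between $\calG_\#\rho_x$ and $\rho_y$ lifts to a coupling of $\rho_x$ and $\rho_y$ concentrated on $\{\calG(x)=\tilde y\}$ --- is precisely the fact the paper asserts without proof inside Lemma~\ref{lem:was_reg_1} (``$\hat\pi_1$ may not be unique, but its existence is always guaranteed''), and your gluing argument via the Disintegration Theorem is the right way to justify it. What your route buys: it bypasses the augmented-space construction and the appeal to Theorem~\ref{thm:marginal}, and makes transparent why the problem collapses to the pointwise minimization defining $\tilde\calF$. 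What the paper's route buys: by exhibiting~\eqref{eq:regularized2} as an instance of~\eqref{eq:W_min} in a lifted space, it gives the conceptual message that moment regularization is itself a Wasserstein projection. Two small points to tighten: equality in your pointwise step requires only that $\gamma_y$ be concentrated on the argmin set of $c(\cdot,y)$, not that $\gamma_y=\delta_{\tilde\calF(y)}$ --- this does not affect the conclusion that $\tilde\calF_\#\rho_y$ attains the infimum, but state it as a sufficient rather than necessary condition; and your measurable-selection remark for $\tilde\calF$ is worth keeping, since the paper glosses over it.
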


This theorem is built upon a nice observation about the regularizer:
\[
\mathsf{R}[\rho_x]=\int |x|^p \rd \rho_x(x)=\mathcal{W}_p^p(\rho_x,\delta_0)\,.
\]
which allows us to rewrite the loss function as follows:
\begin{lemma}\label{lem:was_reg_1}
For any $\rho_y \in \calP(\mathbb{R}^n)$, the objective function defined in~\eqref{eq:regularized2} can be rewritten as:
\begin{align} \label{W2-reg}
\mathcal L(\rho_x)=\mathcal{W}_p^p (\mathcal G_\# \rho_x, \rho_y) + \alpha \int |x|^p \rd \rho_x(x) = \mathcal{W}_p^p (\tilde \calG_\# \rho_x, \bar \rho_y) \,,  
\end{align}
with $\bar \rho_y = \rho_y \otimes \delta_0(x)$ where $\delta_0(x) \in \calP(\mathbb{R}^{m})$ denotes the Dirac delta centered at $0\in \mathbb{R}^{m}$, and $\tilde{\mathcal G} =      \mathcal G  \otimes \left( \alpha^{1/p}\,  \mathsf{I}_m\right) $, with $\mathsf{I}_m$ being the $m$-dimensional identity matrix. More explicitly,
\[
\tilde{\calG}(x):\Theta\subset\mathbb{R}^m\to\calR \times \Theta\subset\mathbb{R}^{n+m}\,,\quad\text{with}\quad \tilde{\calG}(x)= \begin{pmatrix}
    \calG(x)\\
    \alpha^{1/p} x
\end{pmatrix}\,.
\]
\end{lemma}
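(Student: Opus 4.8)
The plan is to verify the identity in Lemma~\ref{lem:was_reg_1} by manipulating the optimal transport couplings on both sides and showing that the two minimization problems have the same value. First I would unpack the right-hand side: a coupling $\gamma \in \Gamma(\tilde\calG_\#\rho_x, \bar\rho_y)$ lives on $(\R^{n+m})\times(\R^{n+m})$, and since $\bar\rho_y = \rho_y \otimes \delta_0$, the second marginal is concentrated on the subspace $\R^n \times \{0\}$. Thus any admissible coupling is supported on points of the form $\big((\calG(x), \alpha^{1/p}x), (y, 0)\big)$, and the transport cost $\int |\tilde z - \tilde w|^p \, \rd\gamma$ splits (because we use the $\ell^p$-type product cost implicit in the construction, or more carefully $|\cdot|^p$ on $\R^{n+m}$ with the convention that makes $\tilde\calG = \calG \otimes \alpha^{1/p}\mathsf{I}_m$ work) as $\int |\calG(x) - y|^p \, \rd\gamma + \alpha \int |x|^p \, \rd\gamma$.

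Next I would argue the correspondence between couplings. Because $\tilde\calG_\#\rho_x$ is itself a pushforward of $\rho_x$ under the injective-in-the-relevant-sense map $\tilde\calG$ (its $x$-block is a scaled identity, so $\tilde\calG$ is injective), a coupling $\gamma$ between $\tilde\calG_\#\rho_x$ and $\bar\rho_y$ is in bijection with a coupling $\pi$ between $\rho_x$ and $\rho_y$: given $\pi \in \Gamma(\rho_x,\rho_y)$, push it forward by $(x,y) \mapsto ((\calG(x),\alpha^{1/p}x),(y,0))$ to get $\gamma$; conversely, the $x$-block of the first coordinate recovers $\pi$. Under this bijection the split cost above becomes $\int |\calG(x)-y|^p \, \rd\pi + \alpha \int |x|^p \, \rd\pi$, and since the second term depends only on the first marginal $\rho_x$ (not on $\pi$), it pulls out as $\alpha \int |x|^p \, \rd\rho_x$. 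Minimizing over $\gamma$ is therefore equivalent to minimizing $\int |\calG(x)-y|^p \, \rd\pi$ over $\pi \in \Gamma(\rho_x,\rho_y)$, whose minimum is exactly $\wasss_p^p(\calG_\#\rho_x, \rho_y)$ — here I would invoke the standard gluing/pushforward fact that $\wasss_p^p(\calG_\#\rho_x,\rho_y) = \min_{\pi\in\Gamma(\rho_x,\rho_y)} \int |\calG(x)-y|^p\,\rd\pi$, since couplings of $\calG_\#\rho_x$ and $\rho_y$ are precisely pushforwards of couplings of $\rho_x$ and $\rho_y$ under $\calG\times\mathsf{I}$. Adding back the constant second term yields $\mathcal{L}(\rho_x)$, completing the identity.

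The main obstacle I anticipate is being careful about the cost function on the product space $\R^{n+m}$ so that the cost genuinely splits additively into the $\calG$-part and the $\alpha|x|^p$-part. With the Euclidean ($p=2$) metric this is immediate since $|(\,a,b\,)|^2 = |a|^2 + |b|^2$; for general $p$ one needs the cost on $\R^{n+m}$ to be $|a|^p + |b|^p$ rather than $(|a|^p+|b|^p)^{p/\cdot}$-type expressions, i.e. one should read $\wasss_p^p$ on the product space with the metric $d((a,b),(a',b'))^p := |a-a'|^p + |b-b'|^p$, which is a legitimate choice of $d$ in~\eqref{eq:wass_p}. I would state this convention explicitly at the start of the proof. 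The remaining subtlety — that the scaling factor $\alpha^{1/p}$ in $\tilde\calG$ correctly produces the weight $\alpha$ on $\int|x|^p$ — is just the homogeneity $|\alpha^{1/p}x - \alpha^{1/p}x'|^p = \alpha|x-x'|^p$, and since the target measure has its $x$-block at $0$ this gives $\alpha|x|^p$; this is routine once the convention is fixed.
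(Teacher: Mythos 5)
Your proposal is correct. It rests on the same three ingredients as the paper's proof: couplings of $\calG_\#\rho_x$ and $\rho_y$ are exactly pushforwards under $\calG\times\mathsf{I}$ of couplings of $\rho_x$ and $\rho_y$; a coupling whose second marginal is $\rho_y\otimes\delta_0$ is forced to sit on $\{(\cdot,(y,0))\}$ so the product-space cost splits into the data-misfit block and the $\alpha|x|^p$ block; and the latter block depends only on the marginal $\rho_x$, hence is constant over the feasible set. The organization differs, though: the paper exhibits one candidate plan $\pi_3=(\tilde\calG\times\mathsf{I})_\#(\hat\pi_1\otimes\delta_0)$ built from the optimal plan $\hat\pi_1$ for $\wass(\calG_\#\rho_x,\rho_y)$, computes its cost to be $\mathcal L(\rho_x)$, and then proves its optimality by a separate contradiction argument, whereas you set up a cost-preserving correspondence between the two feasible sets of couplings and conclude that the two infima coincide in one stroke. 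Your version is arguably cleaner and avoids the contradiction step; it does lean on injectivity of $\tilde\calG$ to invert the correspondence (fine for $\alpha>0$, and avoidable anyway via the same disintegration fact used for $\calG$). You are also right, and more careful than the paper, that for $p\neq 2$ the identity $|\tilde\calG(x)-(y,0)|^p=|\calG(x)-y|^p+\alpha|x|^p$ requires reading $\wass$ on $\mathbb{R}^{n+m}$ with the product metric $d((a,b),(a',b'))=\left(|a-a'|^p+|b-b'|^p\right)^{1/p}$, which is an admissible choice of $d$ in~\eqref{eq:wass_p} but should be stated; the paper uses this silently.
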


We will leave the proof for this lemma later. Applying this Lemma~\ref{lem:was_reg_1}, we obtain that problem~\eqref{eq:regularized2} is equivalent to
\begin{equation}
    \rho_x^\ast = \argmin_{\rho_x \in \mathcal{P}_p} \wass  (\tilde{\calG}_\# \rho_x, \bar{\rho}_y)\,.
\end{equation}
This can be considered as an  overdetermined problem in a similar form with~\eqref{eq:W_min} since 
$$
\tilde{\calG}: \Theta \subset \mathbb{R}^m \rightarrow \calR \times \Theta \subset \mathbb{R}^{n+m}\,,
$$
for which the range lies in a strictly higher-dimensional space than its domain. As a result, Theorem~\ref{thm:marginal} directly applies, which allows us to write down an explicit form of the solution to problem~\eqref{eq:regularized2} as below.

\begin{proof}[Proof for Theorem~\ref{thm:moment_regularizer}]
In this overdetermined system, according to Theorem~\ref{thm:marginal}, we simply need to find the function that solves:
\[
    \calF^{\ex}(\mathbf{y}^*) = \argmin_{x \in \Theta} |\tilde{\calG}(x)-\mathbf{y}^*|^p\,,\quad \text{for}\,\, \mathbf{y}^*=(y,0)\in\mathbb{R}^{n+m}\,.
\]
Then the solution is $\rho_x^\ast=\calF^{\ex}_{\#}\bar{\rho}_y$. Comparing with Definition~\ref{def:proj2}, it is straightforward to see that $\calF^{\ex}((y,0))=\tilde\calF(y)$. Given the specific form of $\bar\rho_y$, we conclude $\rho_x^\ast=\tilde\calF_{\#}\rho_y$.
\end{proof}

\begin{proof}[Proof for Lemma~\ref{lem:was_reg_1}]
We drop the sub-index $m$  on the identity matrix because there is no ambiguity.  Let $\pi_1$ be the optimal transport plan between $\calG_\# \rho_x$ and $\rho_y$ in the optimal transport problem defining the $\mathcal{W}_p$ metric. Then
\[
\mathcal{W}_p^p(\mathcal G_\# \rho_x, \rho_y)  = \int |y' - y|^p \pi_1 (\rd y' \rd y) = \int |\mathcal G(x) - y|^p \hat{\pi}_1 (\rd x \rd y),
\]
where ${\pi}_1  = (\calG \times \mathsf{I})_\#  \hat {\pi}_1$ for some $\hat{\pi}_1 \in  \Gamma(\rho_x, \rho_y)$ and $\Gamma(\rho_x, \rho_y)$ denotes all the couplings between $\rho_x$ and $\rho_y$. Note that if $\calG$ is not one-to-one, $\hat{\pi}_1$ may not be unique, but its existence is always guaranteed. Similarly:
\begin{equation}
    \int |x|^p \rd\rho_x=\int |x-0|^p \hat{\pi}_2(\rd x \rd x')\,,\quad\text{with}\quad \hat{\pi}_2 = \rho_x  \otimes \delta_0(x) \in \Gamma(\rho_x, \delta_0(x)) \,,
\end{equation}
where $\Gamma(\rho_x, \delta_0(x))$ denotes all the couplings between $\rho_x$ and $\delta_0(x)$, and $\delta_0(x) \in \calP(\mathbb{R}^m)$ denotes the Dirac delta at $0\in \mathbb{R}^m$. 
%\textcolor{violet}{MO:Do we ever use $\hat\pi_2$? }\yy{in the line above}
Defining $\hat{\pi}_3  =\hat{\pi}_1\otimes \delta_0(x) \in \Gamma( \rho_x, \rho_y  \otimes \delta_0(x))$, we rewrite:
\begin{align*}
\mathcal{L}(\rho_x) =  & \int |\mathcal G(x) - y|^p \, \hat{\pi}_1 (\rd x \rd y) + \alpha \int |x|^p \rd\rho_x 
    \\  = & \int |\tilde{\mathcal G} (x) - {\bf y}'|^p \,\hat{\pi}_3( \rd x\, \rd {\bf y}')\quad\text{with} \quad {\bf y}'=(y,0)\\
    = &   \int |{\bf y} -{\bf y}'|^p\, {\pi}_3 (\rd {\bf y}  \, \rd{\bf y}' ) \,, \quad {\pi}_3 = (\tilde{\mathcal{G}} \times \mathsf{I})_\#  \hat {\pi}_3  \in \Gamma\left( \tilde{\mathcal{G}}_\# \rho_x, \, \rho_y  \otimes \delta_0(x) \right)\,.
    \end{align*}

To show this is $\mathcal{W}_p^p (\tilde{\calG}_\#\rho_x,\bar\rho_y)$, we also need to show $\pi_3$ is an optimal plan. Assume $\gamma\neq \pi_3$ and $\gamma$ is the optimal transport plan between $ \tilde{\mathcal{G}}_\# \rho_x$ and  $\bar{\rho}_y = \rho_y \otimes \delta_0(x)$ under the cost function $c(\mathbf{y}-\mathbf{y}') = |\mathbf{y}-\mathbf{y}'|^p$, then we have 
\begin{align*}
    \mathcal{W}_p^p({\tilde \calG}_\# \rho_x, \bar \rho_y) = &  \int |{\bf y} -{\bf y}'|^p  \rd\gamma (\rd {\bf y}  \, \rd{\bf y}' ) \\
    = & \int |\tilde{\mathcal G} (x) - {\bf y}'|^p  \rd\hat{\gamma}( \rd x\, \rd {\bf y}'),\quad \gamma = (\tilde{\mathcal{G}} \times \mathsf{I})_\# \hat{\gamma},\quad \hat{\gamma}\in \Gamma\left(\rho_x, 
       \rho_y  \otimes \delta_0(x) \right) \\
    = & \int |\mathcal G(x) - y|^p  \rd \hat{\gamma}_1 ( \rd x \,  \rd y) + \alpha \int |x|^p  \rd \rho_x  \,, \quad \hat{\gamma}_1 \in \Gamma(\rho_x, 
        \rho_y ) \\
    = & \int |y - y'|^p  \rd \hat{\gamma}_2 (\rd y \, \rd y') + \alpha \int |x|^p \rd \rho_x  \,, \quad  \hat{\gamma}_2  = (\calG \times \mathsf{I})_\# \hat{\gamma}_1  \in \Gamma(\calG_\# \rho_x, 
        \rho_y ) \\
    \geq & \,\, \mathcal{W}_p^p (\mathcal G_\# \rho_x, \rho_y ) + \alpha \int |x|^p \rd \rho_x\,, \\
    = & \int |{\bf y} -{\bf y}'|^p {\pi}_3 (\rd {\bf y}  \, \rd{\bf y}' ) \,,
\end{align*}
where $ \hat{\gamma}_1$ and $\hat{\gamma}_2$ are determined by $\gamma$. This
contradicts the assumption that $\pi_3$ is not optimal. So we conclude with~\eqref{W2-reg}.
\end{proof}

\subsection{Examples}
Similar to what is done in Sections~\ref{sec:example_over} and~\ref{sec:example_under}, we design a linear and a simple nonlinear example for which we can explicitly spell out the solutions to the regularized problem.
\subsubsection{A linear pushforward map}

We first assume $\calG=\mathsf{A}\in\mathbb{R}^{m\times n}$ with $m\geq n$. Setting $\calD = \mathcal{W}_2^2 $ and $\mathsf{R}[\rho_x]= \int |x|^2 \rd \rho_x(x)$, the solution to~\eqref{eq:reg}, according to Theorem~\ref{thm:moment_regularizer}, is:
    \begin{align} \label{rhous}
\rho_x = \left( (\mathsf{A}^\top \mathsf{A} + \alpha \mathsf{I})^{-1} \mathsf{A}^\top\right)_\# \rho_y  \,.
\end{align}
This can be easily deduced from Definition~\ref{def:proj2} with $\calG$ replaced by the linear operator.

One fascinating fact is that the Wasserstein-moment setting resonates nicely with Tikhonov regularization. In the Euclidean space, Tikhonov regularization was introduced to smooth the signifying effect of small singular values in $\mathsf{A}$ on small perturbations in the data. Namely, suppose the ground truth data $y$ is perturbed to become $y+\delta$ as the given data. In that case, one needs to regularize the problem to temper the amplification of the smallest singular value of $\mathsf{A}$. The choice of $\alpha$ thus plays an important role in controlling the error magnification. The same phenomenon is observed in the probability space as well, which is stated in the Corollary~\ref{reg-W2} below.

\begin{corollary} \label{reg-W2}
Denote $\rho_y^\true$ the groundtruth probability measure and $\rho_y^\delta$ its $\delta$-perturbation. They are both in $\calP_2(\mathbb{R}^n)$. Denote then $\rho_x^\true$ the solution to problem~\eqref{eq:W_min} with the reference data distribution $\rho_y^\true$, and $\rho_x^{\delta,\alpha}$ the solution to problem~\eqref{eq:regularized2} using regularizing coefficient $\alpha$ on the reference data distribution $\rho_y^\delta$. Then we have that
\begin{align} \label{12}
\mathcal{W}_2(\rho_x^\true , \rho_x^{\delta,\alpha}) \leq  \|(\mathsf{A}^\top \mathsf{A} + \alpha \mathsf{I})^{-1} \mathsf{A}^\top\|_2 \mathcal{W}_2(\rho_y^\true, \rho_y^\delta ) 
+
\|(\mathsf{A}^\top \mathsf{A} + \alpha \mathsf{I})^{-1} \mathsf{A}^\top - \mathsf{A}^{\dagger}\|_2 \sqrt{{\mathbb{E}_{\rho_y^\true} \left[y^2\right]}}\,. 
\end{align}
Set $\sigma_m=\sigma_{\min}(\mathsf{A})$ as the smallest singular value for $\mathsf{A}$ and denote by $\rho(\mathsf{A})$ the set of all singular values of $\mathsf{A}$. The bound in~\eqref{12} can be further simplified to 
\begin{align}
\mathcal{W}_2(\rho_x^\true, \rho_x^{\delta,\alpha}) 
& \leq \left(  \max_{\sigma_i \in \rho(\mathsf{A})} \frac{\sigma_i}{\alpha + \sigma_i^2} \right) \mathcal{W}_2(\rho^\true_y, \rho^\delta_y) +  \frac{\alpha}{\sigma_m(\sigma_m^2+\alpha)} \sqrt{\mathbb{E}_{\rho^\true_y} \left[|y|^2\right]}  \nonumber \\
&\leq  \frac{1}{2\sqrt{\alpha}} \mathcal{W}_2(\rho^\true_y, \rho^\delta_y) +   \frac{\alpha}{\sigma_m(\sigma_m^2+\alpha)}\sqrt{\mathbb{E}_{\rho^\true_y} \left[|y|^2\right]} \label{eqn:12_simplify_2} \,.
\end{align}
\end{corollary}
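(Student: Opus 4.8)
The plan is to reduce the bound to two elementary estimates on how the $\mathcal{W}_2$ distance behaves under linear pushforwards, and then finish with standard singular-value bookkeeping.

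First I would record the explicit forms of the two measures being compared. Since $\mathsf{A}$ has full column rank (tall matrix), Theorem~\ref{thm:marginal} applied to $\calG=\mathsf{A}$ with the Euclidean metric gives $\rho_x^\true = \mathsf{A}^\dagger_\#\rho_y^\true$, because $\calF(y)=\argmin_{x}|\mathsf{A}x-y|^2=\mathsf{A}^\dagger y$ with $\mathsf{A}^\dagger=(\mathsf{A}^\top\mathsf{A})^{-1}\mathsf{A}^\top$. Likewise, Theorem~\ref{thm:moment_regularizer} (equivalently, the identity~\eqref{rhous}) gives $\rho_x^{\delta,\alpha}=(\mathsf{B}_\alpha)_\#\rho_y^\delta$, where $\mathsf{B}_\alpha:=(\mathsf{A}^\top\mathsf{A}+\alpha\mathsf{I})^{-1}\mathsf{A}^\top$ is obtained by setting the gradient of $|\mathsf{A}x-y|^2+\alpha|x|^2$ to zero in Definition~\ref{def:proj2}.

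Next I would insert the auxiliary measure $(\mathsf{B}_\alpha)_\#\rho_y^\true$ and use the triangle inequality for $\mathcal{W}_2$:
\[
\mathcal{W}_2(\rho_x^\true,\rho_x^{\delta,\alpha}) \le \mathcal{W}_2\!\left(\mathsf{A}^\dagger_\#\rho_y^\true,\,(\mathsf{B}_\alpha)_\#\rho_y^\true\right) + \mathcal{W}_2\!\left((\mathsf{B}_\alpha)_\#\rho_y^\true,\,(\mathsf{B}_\alpha)_\#\rho_y^\delta\right).
\]
For the second term, I would push the optimal coupling of $(\rho_y^\true,\rho_y^\delta)$ through $\mathsf{B}_\alpha\times\mathsf{B}_\alpha$; this is an admissible (in general sub-optimal) coupling of the two pushforwards, so $\mathcal{W}_2^2((\mathsf{B}_\alpha)_\#\rho_y^\true,(\mathsf{B}_\alpha)_\#\rho_y^\delta)\le\|\mathsf{B}_\alpha\|_2^2\,\mathcal{W}_2^2(\rho_y^\true,\rho_y^\delta)$. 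For the first term, the two measures are pushforwards of the \emph{same} $\rho_y^\true$ under two different maps, so the ``diagonal'' coupling $(\mathsf{A}^\dagger\times\mathsf{B}_\alpha)_\#\rho_y^\true$ is admissible, giving $\mathcal{W}_2^2(\mathsf{A}^\dagger_\#\rho_y^\true,(\mathsf{B}_\alpha)_\#\rho_y^\true)\le\|\mathsf{A}^\dagger-\mathsf{B}_\alpha\|_2^2\,\mathbb{E}_{\rho_y^\true}[|y|^2]$. Taking square roots and adding yields~\eqref{12}.

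Finally I would diagonalize via the SVD $\mathsf{A}=U\Sigma V^\top$: the singular values of $\mathsf{B}_\alpha$ are $\sigma_i/(\sigma_i^2+\alpha)$, so $\|\mathsf{B}_\alpha\|_2=\max_{\sigma_i\in\rho(\mathsf{A})}\sigma_i/(\alpha+\sigma_i^2)$; and the singular values of $\mathsf{A}^\dagger-\mathsf{B}_\alpha$ are $\bigl|\sigma_i^{-1}-\sigma_i/(\sigma_i^2+\alpha)\bigr|=\alpha/\bigl(\sigma_i(\sigma_i^2+\alpha)\bigr)$, a decreasing function of $\sigma_i$ whose maximum is therefore attained at $\sigma_m=\sigma_{\min}(\mathsf{A})$, so $\|\mathsf{A}^\dagger-\mathsf{B}_\alpha\|_2=\alpha/\bigl(\sigma_m(\sigma_m^2+\alpha)\bigr)$. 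This gives the first line of~\eqref{eqn:12_simplify_2}, and the AM--GM bound $\alpha+\sigma_i^2\ge2\sqrt{\alpha}\,\sigma_i$ then gives $\max_i\sigma_i/(\alpha+\sigma_i^2)\le 1/(2\sqrt{\alpha})$, which is the second line. The computation is routine throughout; the only places that need a little care are checking that the constructed couplings genuinely lie in the respective coupling sets (so that they provide \emph{upper} bounds on $\mathcal{W}_2$) and verifying the monotonicity of $\sigma\mapsto\alpha/(\sigma(\sigma^2+\alpha))$ that pins the maximizing singular value to $\sigma_m$. I do not expect any substantive obstacle.
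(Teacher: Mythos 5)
Your proposal is correct and follows essentially the same route as the paper's proof: the same triangle inequality through the intermediate measure $\bigl((\mathsf{A}^\top\mathsf{A}+\alpha\mathsf{I})^{-1}\mathsf{A}^\top\bigr)_\#\rho_y^\true$, the same two coupling-based bounds for the two resulting terms, and the same singular-value computations for the final simplification. The only cosmetic difference is that you prove the Lipschitz-pushforward stability of $\mathcal{W}_2$ directly by transporting the optimal coupling, where the paper cites an external reference for that step.
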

This corollary clearly shows that the error (as written in \eqref{eqn:12_simplify_2}) has two sources: the former being the noise in the measurement and the latter coming from the regularization. It is common to equate these two contributions to seek the optimal choice of $\alpha$ that minimizes the combination of two errors when $\sigma_m$ is known.
\begin{proof}
To show~\eqref{12}, we deploy the triangle inequality:
\begin{equation*}
 \mathcal{W}_2 (\rho_x^\true , \rho_x^{\delta,\alpha}) \leq
\mathcal{W}_2(\tilde \rho_x, \rho_x^{\delta,\alpha}) + \mathcal{W}_2(\rho_x^\true , \tilde \rho_x)\,,
\end{equation*}
where
\begin{equation}\label{eqn:middle_term}
\tilde \rho_x=\left( (\mathsf{A}^\top \mathsf{A} + \alpha \mathsf{I})^{-1} \mathsf{A}^\top\right)_\# \rho_y^\true\,.
\end{equation}
Notice in this setting, by~\eqref{rhous}, $\rho_x^{\delta,\alpha}=\left(  (\mathsf{A}^\top \mathsf{A} + \alpha \mathsf{I})^{-1} \mathsf{A}^\top\right)_\# \rho_y^{\delta}$. By utilizing the continuity of the map $(\mathsf{A}^\top \mathsf{A} + \alpha \mathsf{I})^{-1} \mathsf{A}^\top$ and citing~\cite[Theorem 3.2]{ernst2022wasserstein}, we obtain the first term in~\eqref{12}. To control the second term, we expand:
    \begin{eqnarray*}
        \mathcal{W}_2^2 (\tilde \rho_x, \rho_x^\true) &=& \mathcal{W}_2^2\left(\left((\mathsf{A}^\top \mathsf{A} + {\alpha} \mathsf{I})^{-1} \mathsf{A}^\top\right)_\# \rho^\true_y\,,\, \mathsf{A}^{\dagger}_\# \rho^\true_y\right) \\
        &\leq & \int \left| \left(\mathsf{A}^\top \mathsf{A} + {\alpha} \mathsf{I}\right)^{-1} \mathsf{A}^\top y -\mathsf{A}^{\dagger} y \right|^2 \rd \rho^\true_y(y) \\
        &\leq & \left\|\left(\mathsf{A}^\top \mathsf{A} + {\alpha} \mathsf{I}\right)^{-1} \mathsf{A}^\top -\mathsf{A}^{\dagger}\right\|^2_2 \underbrace{\int |y|^2\rd\rho^\true_y(y)}_{={\mathbb{E}_{\rho^\true_y} \left[|y|^2\right]}}\,
    \end{eqnarray*}
The same estimates was seen in \cite[Theorem 3.1]{baptista2023approximation}. From~\eqref{12}, we bound matrices' $2$-norm using:
\begin{align*}
\mathcal{W}_2(\rho_x^\true, \rho_x^{\delta,\alpha}) \leq \left(  \max_{\sigma_i \in \rho(\mathsf{A})} \frac{\sigma_i}{\alpha + \sigma_i^2} \right) \mathcal{W}_2(\rho^\true_y, \rho^\delta_y) +  \frac{\alpha}{\sigma_m(\sigma_m^2+\alpha)} \sqrt{\mathbb{E}_{\rho^\true_y} \left[|y|^2\right]}\,.
\end{align*}
Finally, we arrive at the conclusion by noting that $\max\limits_{\sigma_i \in \rho(\mathsf{A})} \frac{\sigma_i}{\alpha + \sigma_i^2}\leq \frac{1}{2\sqrt{\alpha}}$.
\end{proof}

\subsubsection{A simple nonlinear example}
In this section, we revisit the example in Section~\ref{sec:example0} but consider the optimization formulation of~\eqref{eq:reg}.  
Recall $\calG : B_{(1,1)}(1) \to[0,1]$, where $B_{(1,1)}(1)$ is a ball centered at $(1,1)$ with radius $1$, 
\[
r = \calG(x_1,x_2) = \sqrt{{(x_1-1)^2+(x_2-1)^2}}\,.
\]
Then any preimage of $r$ is:
\[
\calG^{-1}(r) = \left\{(x_1=1+ r\cos\theta,x_2=1+ r\sin\theta)\,,\quad\theta\in[0,2\pi]\right\}\,.
\]

Suppose we are given a probability measure $\mu_r$ with a support in $[0,1]$. Then the two problems mentioned above are to find a distribution on $B_{(1,1)}(1)$ that minimmizes~\eqref{eq:reg} under different choices of $\mathsf{R}$. Explicit solutions are available:
\begin{itemize}
    \item[--]{$\calD= \KL$ and $\mathsf{R}[\rho_x] = \KL(\rho_x||\mathcal{M})$}. This is the formulation deployed in~\eqref{eq:regularized_KL}. According to Theorem~\ref{reg-KL}, especially Equation~\eqref{312_revise}, when $\alpha = 1$:
    \[
    \rho^*(x_1,x_2) \propto   \calM (x_1,x_2) \sqrt{ \frac{\mu_r\left(\calG(x_1,x_2)\right)}{\int \delta(\calG(x_1,x_2)-\calG(x_1',x_2')))\mathcal M(x_1',x_2') \rd{x_1'}\rd{x_2'}} }\,.
    \]
    In this context, we need to compute the denominator term 
    $$
    \int \delta(\calG(x_1,x_2)-\calG(x_1',x_2')))\mathcal M(x_1',x_2') \rd{x_1'}\rd{x_2'}\,.
    $$ 
    The analytic solution is not available for an arbitrarily given $\mathcal M$ but is explicit when $\calM$ has special forms. One such example is when $\calM = C_\calM\exp\left(-\frac{x_1^2+x_2^2}{2}\right)$ where $C_\calM$ is the normalizing constant. Under this setup, we define $(x_1',x_2'):=(1+r'\cos\theta,1+r'\sin\theta)$ for a given $r'$ and $\theta$, and set $r=\calG(x_1,x_2)$. Upon conducting a change of variable:
\begin{align*}
       & \quad \int \delta(\calG(x_1,x_2)-\calG(x_1',x_2')))\mathcal M(x_1',x_2') \rd{x_1'}\rd{x_2'}\\
       &=\int \delta(r - r') \calM(1+r'\cos\theta,1+r'\sin\theta) \, r' \, \rd r' \, \rd\theta \\
    &= r C_\calM \int_0^{2\pi} \exp\left( -\frac{1}{2}\left((r\cos\theta+1)^2 + (r\sin\theta+1)^2\right) \right) \rd\theta \\
    &=  r C_\calM  \exp\left(-\frac{1}{2}(2+r^2)\right) \int_0^{2\pi} \exp\left(-r \cos\theta - r \sin\theta \right)\rd\theta\\
    &= 2\pi r C_\calM \exp\left(-\frac{1}{2}(2+r^2)\right) I_0(\sqrt{2}r) \,,
    \end{align*}
where $I_0$ is the Bessel function\footnote{Bessel function is defined as $I_0(a) = \frac{1}{2\pi}\int_0^{2\pi}\exp\left( a\cos\theta\right)\rd\theta$.}. This leads to the final result of
\[
\rho^*(x_1,x_2) \propto   \exp\left(-\frac{x_1^2+x^2_2}{2}\right)\sqrt{\frac{\mu_r(r)}{2\pi re^{-\frac{1}{2}(2+r^2)}I_0(\sqrt{2}r)}}\,,\quad\text{where}\quad r=\calG(x_1,x_2)\,.
\]

We present the optimal solution, $\rho^*$, in Figure~\ref{fig:reg_KL} for both the unregularized case $(\alpha = 0)$ (also depicted in Figure~\ref{fig:under-entropy}) and the regularized case $(\alpha = 1)$. Given that the prior distribution $\mathcal{M}$ is centered at $x = 0$, the regularized solution (Figure~\ref{fig:reg_KL_1}) combines characteristics of both the prior distribution $\mathcal{M}$ and the unregularized distribution shown in Figure~\ref{fig:reg_KL_0}.
\begin{figure}[ht!]
    \centering
    \subfloat[$\alpha = 0$]{\includegraphics[width=0.5\linewidth]{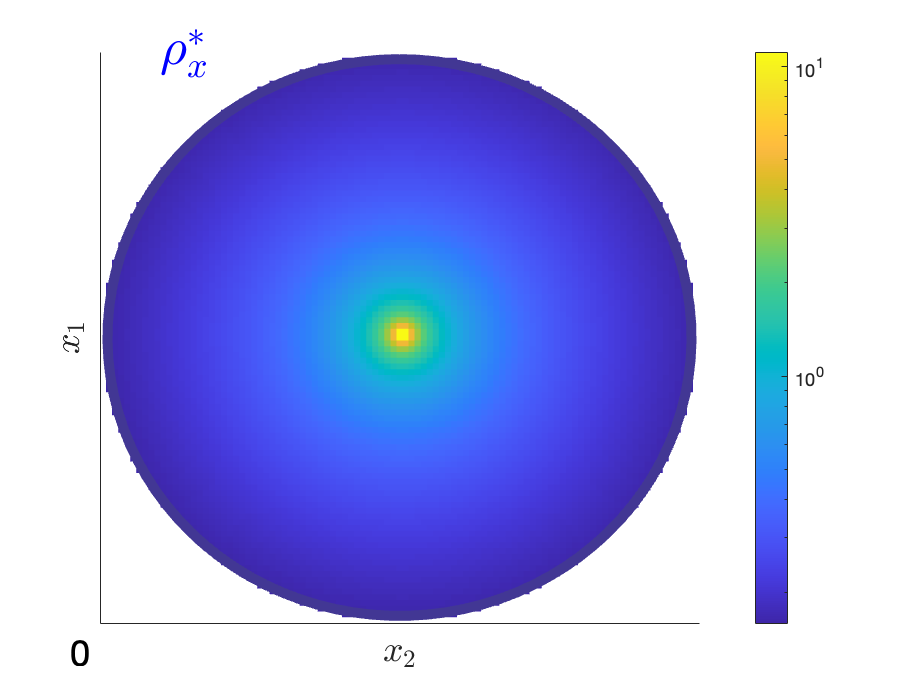}\label{fig:reg_KL_0}}
    \subfloat[$\alpha = 1$]{\includegraphics[width=0.5\linewidth]{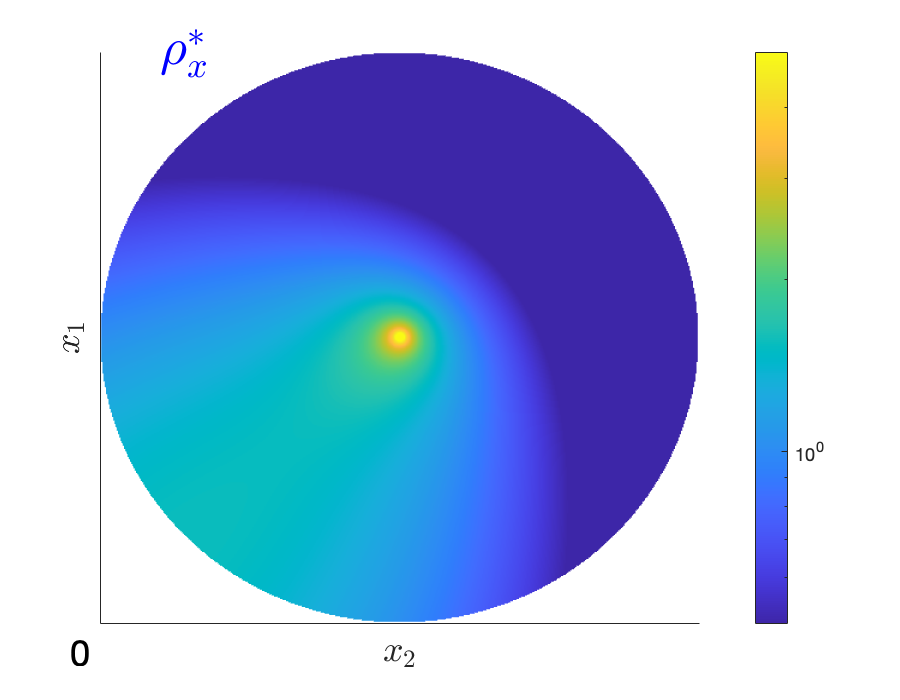}\label{fig:reg_KL_1}}
    \caption{A comparison of the reconstructed density for (a) the unregularized case with entropy cost $\calE[\rho_x] = \int\rho_x \ln \rho_x \rd x$ and (b)~the regularized case with  $(\calD, \mathsf{R}) = (\text{KL},\text{KL}(\cdot \|\calM))$ where $\calM \propto \exp\left(-\frac{x_1^2 + x_2^2}{2}\right)$ and the parameter $\alpha = 1$. The data distribution is chosen to be $\mu_r = \mathcal{U}([0, 1])$, the uniform distribution in $[0, 1]$}
    \label{fig:reg_KL}
\end{figure}

    \item[--]
    $\calD = \mathcal{W}_2^2 $ and $\mathsf{R}[\rho_x]= \int |x|^2 \rd \rho_x(x)$. Then according to Definition~\ref{def:proj2}, we have, in this specific context:
    \[
    \mathcal{F}(r) = \argmin_{(x_1,x_2)\in B_{(1,1)}(1)} \left\{|\calG(x_1,x_2) - r|^2 + \alpha |x_1|^2+\alpha|x_2|^2\right\}\,.
    \]
    Through simple calculation, it can be shown that the optimal solution has to be supported along the diagonal set $\{(x_1,x_2)\in B_{(1,1)}(1): x_1 = x_2\}$, reducing the problem to solving:
  \[
  \begin{aligned}
F(r)&:= \argmin_{x \in [0,1]} (\sqrt{2}|x-1|-r)^2 + 2\alpha x^2\\
 &=\argmin_{x \in [0,1]} \left[(2+2\alpha)x^2+(-4+2\sqrt{2}r)x+(2-2\sqrt{2}r+r^2)\right]\,.
 \end{aligned}
  \]
  Since this objective function is quadratic, the solution is explicit:
  \[
  \mathcal{F}(r)=(F(r),F(r))\,,\quad\text{with}\quad F(r) = \frac{1-\frac{\sqrt{2}}{2}r}{1+\alpha}\,.
  \]
  According to Theorem~\ref{thm:moment_regularizer}, we have $\rho_x^\ast = \mathcal{F}_\# \mu_r$. It is a distribution concentrated along the diagonal.

  In Figure~\ref{fig:reg_Wp0}, we revisit the unregularized solution where $\mathcal{E}[\rho_x] = \int |x|^2 \, \mathrm{d}\rho_x$ (also shown in Figure~\ref{fig:under-moment}). In Figure~\ref{fig:reg_Wp1}, we present the optimal parameter solution with regularization. The dotted circles in Figure~\ref{fig:reg_Wp0} highlights the level sets $\mathcal{G}^{-1}(r)$ for a fixed $r$. Only a single point closest to the origin on the level set is selected according to the function $\mathcal{H}$ (see Section~\ref{sec:example0}). 

With a nonzero regularization coefficient, the map shifts from $\mathcal{H}$ to its affine transform $\mathcal{F}$, which depends on the coefficient $\alpha$. As a result, the original level sets (dotted circles in Figure~\ref{fig:reg_Wp1}) are translated into new level sets (solid regions in Figure~\ref{fig:reg_Wp1}), and the point closest to the origin is selected to inherit all the mass from the data distribution at $r$. In other words, the two distributions in Figure~\ref{fig:reg_Wp} are related through a linear pushforward map.
\end{itemize}

\begin{figure}[ht!]
     \centering
    \subfloat[$\alpha = 0$]{\includegraphics[width=0.5\linewidth]{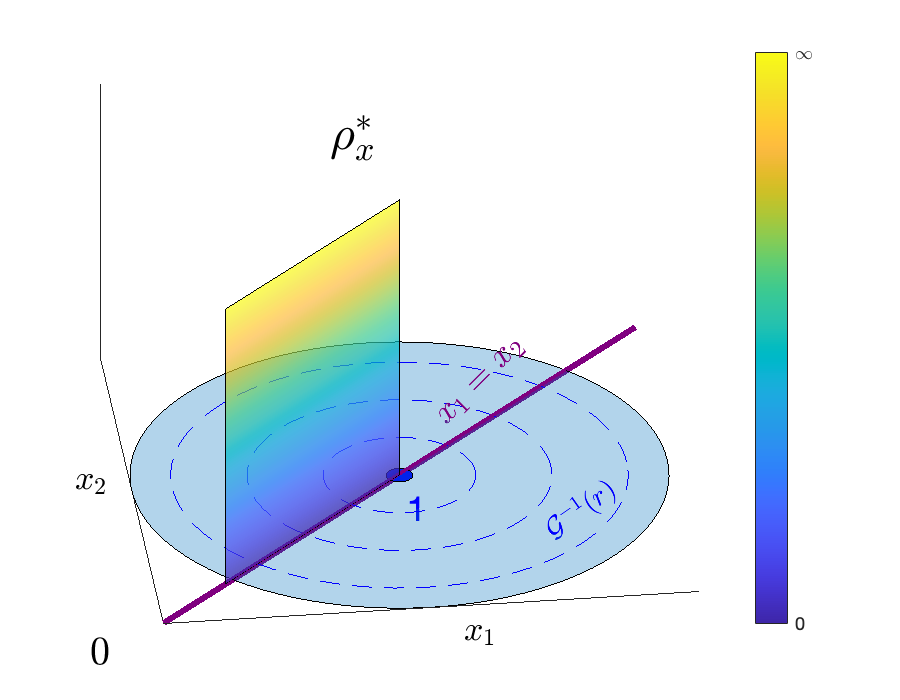}\label{fig:reg_Wp0}}
    \subfloat[$\alpha \neq 0 $]{\includegraphics[width=0.5\linewidth]{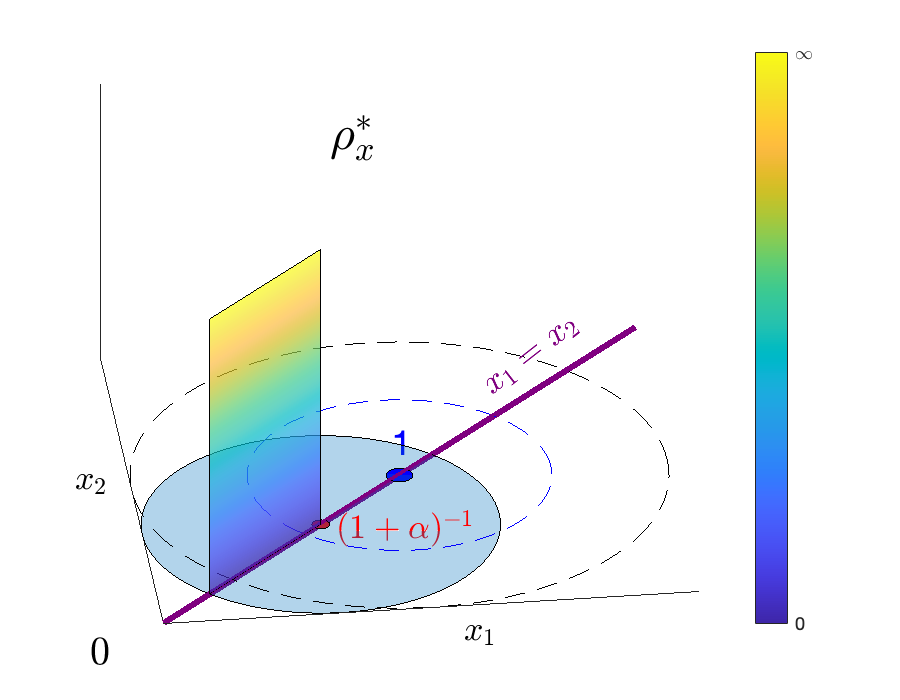}\label{fig:reg_Wp1}}
    \caption{A comparison of the optimal distributions under different setups: (a) shows the result without regularization and the minimum norm cost $\calE[\rho_x] = \int |x|^2 \rd \rho_x$, while (b) shows the result for the regularized version with $(\calD,\mathsf{R}) = (\mathcal{W}_2^2, \int |x|^2 \rd \rho_x(x))$ and  $\alpha\neq 0$.} 
    \label{fig:reg_Wp}
\end{figure}

\section*{Acknowledgments}
The authors thank Prof.~Youssef Marzouk and Prof.~L\'ena\"ic Chizat for comments and suggestions on an earlier version of this work.

\bibliography{sample.bib, ref.bib}
\bibliographystyle{siam}

\end{document}